\newtheorem{theorem}{Theorem}
\theoremstyle{plain}
\newtheorem{corollary}{Corollary}
\newtheorem{definition}{Definition}
\newtheorem{example}{Example}
\newtheorem{proposition}{Proposition}
\newtheorem{remark}{Remark}
\numberwithin{equation}{section}
\begin{document}

\title[$\phi${-}$\delta$-Primary {Hyperideals in Krasner Hyperrings }]{$\phi$-$\delta$-Primary Hyperideals in Krasner Hyperrings }
\author[E. Kaya]{Elif Kaya}
\address{Department of Mathematics and Science Education, Istanbul Sabahattin Zaim
University, Istanbul, Turkey. Orcid: 0000-0002-8733-2934}
\email{elif.kaya@izu.edu.tr}
\author[M. Bolat]{Melis Bolat}
\address{Department of Mathematics, Yildiz Technical University, Istanbul, Turkey.
Orcid: 0000-0002-0667-524X}
\email{melisbolat1@gmail.com}
\author[S. Onar]{Serkan Onar}
\address{Department of Mathematical Engineering, Yildiz Technical University, Istanbul,
Turkey. Orcid: 0000-0003-3084-7694}
\email{serkan10ar@gmail.com}
\author[B.A. Ersoy]{Bayram Ali Ersoy}
\address{Department of Mathematics, Yildiz Technical University, Istanbul, Turkey.
Orcid: 0000-0002-8307-9644}
\email{ersoya@yildiz.edu.tr}
\author[K. Hila]{Kostaq Hila}
\address{Department of Mathematical Engineering, Polytechnic University of Tirana,
Tirana, Albania. Orcid: 0000-0001-6425-2619}
\email{kostaq\_hila@yahoo.com}
\subjclass[2000]{ 13A15, 13C05, 16Y20.}
\keywords{$\phi$-$\delta$-primary Hyperideal, $\phi$-prime Hyperideal, $\phi$-primary
Hyperideal, Krasner Hyperring, Generalization.}

\begin{abstract}
In this paper, we study commutative Krasner hyperring with nonzero
identity. $\phi$-prime, $\phi$-primary and $\phi$-$\delta$-primary hyperideals
are introduced. The concept of $\delta$-primary
hyperideals is extended to $\phi$-$\delta$-primary hyperideals. Some
characterizations of hyperideals are provided to classify them. The relation between $\phi$-$\delta$-primary hyperideal and other hyperideals is discussed.

\end{abstract}
\maketitle

\section{Introduction}

In commutative ring theory prime and primary ideals have a significant place.
The importance of prime ideals encourages researchers to expand these concepts
and find applications. Many different types of generalizations have been
investigated by several authors, some of them \cite{2}, \cite{5}, \cite{6},
\cite{9}, \cite{12}. Prime and primary ideals are generalized to $\phi
$-prime and $\phi$-primary ideals. Let $\Re$ be a commutative ring
with nonzero identity. Denote the set of all ideals of $\Re$ by $L(\Re)$
(proper ideals of $\Re$ by $L^{\ast}(\Re).$ Let $\phi$ be a function such that
$\phi:L(\Re)\rightarrow L(\Re)\cup\{\emptyset\}$. Let $N$ be an ideal of $\Re$.
$N$ is called a $\phi$-prime ideal in \cite{2}, if $ab\in N-\phi(N)$, then
either $a\in N$ or $b\in N$ for some $a,b\in\Re$. By the way, $N$ is called a
$\phi$-primary ideal when, if $ab\in N-\phi(N)$, then $a\in N$ or $b^{k}\in N$ for
some $a,b\in\Re,~k\in%
\mathbb{N}
$ \cite{6,9}. The image of ideal, $\phi(N),$ can be equal $0,$
$\emptyset,~$\ $N,~N^{2},$ $N^{n},$ $N^{w}$ ($w$ denotes the intersection of
ideals of $N_{i}$). A proper ideal $N$ of $\Re$ is called weakly prime
(primary) ideal respectively in \cite{3} (\cite{4}) if $0\neq ab\in N,$ for
some $a,b\in\Re$, then $a\in N$ or $b\in N$ ($b^{k}\in N$ for some $k\in%
\mathbb{N}
$). Anderson generalized it in \cite{2}, where $N$ is weakly $\phi$-prime ideal when
$\phi(N)=0.$ Zhao introduced $\delta$-primary ideal as an expansion of an
ideal, in \cite{19}, $\delta:L(\Re)\longrightarrow L(\Re)$ is a function that
meets the following requirements: $i)$ $N$ $\subseteq\delta(N)$, for all
ideals $N$ of $\Re,ii)$ If $N\subseteq M,$ where $N$ and $M$ are ideals of
$\Re$, then $\delta(N)\subseteq\delta(M),~iii)$ $\delta(K\cap L)=\delta
(K)\cap\delta(L)$\ for all ideals $K,L$ of $\Re.$ Entire of the $\delta$ ideal
expansions provides the property $\delta^{2}=\delta$, which is $\delta
(\delta(N))=\delta(N)$ for all ideal $N$ of $\Re$ \cite{19}. A. Jaber chose
$\phi$ such a reduction function in \cite{13}, which satisfies the following
requirements: $i)$ $\phi(N)\subseteq N$, for all ideals $N$ of $\Re, $ $ii)$
If $N\subseteq M,$ where $N$ and $M$ are ideals of $\Re$, then $\phi
(N)\subseteq\phi(M).$ He obtained generalization of $\phi$-$\delta
$-primary ideal by combining these two concepts. Let $N$ be an ideal of
$\Re, \delta$ be an ideal expansion and $\phi$ be an ideal reduction \cite{13}. $N$ is called $\phi$-$\delta$-primary if $ab\in N-$ $\phi(N)$, then
either $a\in N$ or $b\in\delta(N)$, for all $a,b\in\Re.$

The theory of hyperstructures was innovated by Marty in 1934 \cite{15}. He
defined hypergroupoid $(G,\circ)$ for $G\not =\emptyset$, $P^{\ast}(G)$
represents family of nonempty subsets of $G$ and $\circ:G\times
G\longrightarrow P^{\ast}(G)$ is a hyperoperation. Let $(G,\circ)$ be a
hypergroupoid. $G$ is a semihypergroup, if $\forall a,b,c\in G$, $a\circ(b\circ c)=(a\circ
b)\circ c,$ which means $\bigcup\limits_{u\in a\circ b}u\circ c=\bigcup
\limits_{v\in b\circ c}a\circ v$. If $\forall a\in G,~$ there exists $e\in G$ such that $a\in(e\circ a)\cap(a\circ
e)$ in another phrase $\left\{  a\right\}  =(e\circ a)\cap(a\circ e),$ then
$e$ is called identity element. Let $(G,\circ)$ be a semihypergroup. For
$\forall a\in G,$ if $a\circ G=G\circ a=G,$ then $(G,\circ)$ is called
hypergroup. Let $(G,\circ)$ be a hypergroup and $\emptyset\neq K$ be a subset
of $G.~$If $a\circ K=K\circ a=K,$ for $\forall a\in K,$ then $(K,\circ)$ is
called subhypergroup of $(G,\circ)$. Let $(G,\circ)$ be a hypergroup.
If $a\circ b=b\circ a,$ for $\forall a,b\in G,$ then $(G,\circ)$ is
commutative hypergroup \cite{15}. Mittas pioneered the theory of canonical
hypergroups in \cite{16}: Let $\Re\not =\emptyset$. $(\Re,+)$ is called a
canonical hypergroup ($+$ is a hyperoperation) if the following
axioms are satisfied: $i)$ $a+(b+c)=(a+b)+c,$ for $a,b,c\in\Re; ii)$ $a+b=b+a,$ for
$a,b\in\Re;$ $iii)$ $\exists0\in\Re$ such that $a+0=\{a\},$ for any $a\in\Re;$
$iv)$ for any $a\in\Re,$ $\exists!a^{\prime}\in\Re,$ such that $0\in
a+a^{\prime}$; $v)$ $c\in a+b$ implies that $b\in-a+c$ and $a\in c-b,$ which
means $(\Re,+)$ is reversible. Hyperrings and hyperfields were introduced by
Krasner in \cite{14} using the canocical hypergroups. $(\Re,+,\cdot)$ is called
Krasner hyperring if the following statements hold: $i) (\Re,+)$ is a
canonical hypergroup; $ii) (\Re,\cdot)$ is a semigroup having $0$ as $a\cdot 0=0\cdot a=0,$
for all $a\in\Re;$ $iii)$ $(b+c)\cdot a=(b\cdot a)+(c\cdot a)$ and $a\cdot (b+c)=(a\cdot b)+(a\cdot c),$ for
all $a,b,c\in\Re.$ Also Ameri and Norouzi, Corsini, Dasgupta, Davvaz, Leoreanu
Fotea, studied hyperrings in more details in \cite{1}, \cite{7}, \cite{8},
\cite{10}, \cite{11} \cite{17}. Ye\c{s}ilot et al. defined a hyperideal
expansion and $\delta$-primary hyperideal in \cite{18}. $\delta:L(\Re
)\longrightarrow L(\Re)$ function is defined as a hyperideal expansion
function that meets the following requirements: $i)$ $N$ $\subseteq\delta(N)$,
for all hyperideals $N$ of $\Re,$ $ii)$ If $N\subseteq M,$ where $N$ and $M$
are hyperideals of $\Re$, then $\delta(N)\subseteq\delta(M).$ Given an
expansion $\delta$ of hyperideals $N$ of a hyperring $\Re$ is called $\delta
$-primary if $ab\in N$, then $a\in N$ or $b\in\delta(N)$, for all $a,b\in\Re.$

In this paper, our goal is to extend the concept of $\delta$-primary
hyperideals to $\phi$-$\delta$-primary hyperideals in Krasner hyperrings and
we intend to give some generalizations of hyperideals. Throughout this paper,
$(\Re,\oplus,\circ)$ will be a commutative Krasner hyperring with nonzero
identity. We denote the set of all hyperideals of $\Re$ by $L(\Re)$ and the
set of all proper hyperideals of $\Re$ by $L^{\ast}(\Re).$ We take $\phi$ as a
function $\phi:L(\Re)\rightarrow L(\Re)\cup\{\emptyset\}$ at the sections of
generalizations of prime and primary hyperideals$.$ Firstly, we define $\phi
$-prime and $\phi$-primary hyperideals in Krasner hyperrings and we give some
characterizations for prime and primary hyperideals. Let $\phi$ be a function
such that $\phi:L(\Re)\rightarrow L(\Re)\cup\{\emptyset\}$ and $N$ be a
hyperideal of $\Re.$ $N$ is said to be $\phi$-prime ($\phi$-primary)
hyperideal of $\Re$ if $a\circ b\in N-$ $\phi(N),$ then $a\in N$ or $b\in N$
$(b^{n}\in N$ respectively, $\exists n\in%
\mathbb{N}
),$ for $a,b\in\Re.$ Among other things, we give some characterizations in
Theorem \ref{Thmchar} and Theorem \ref{Thmchar2} as main theorems. Then we
define $\phi$-$\delta$-primary hyperideals in Krasner hyperrings and also give
several characterizations (See; Theorem \ref{tm1} and Theorem \ref{tweak}). In
this case, difference is that, $\phi:L(\Re)\rightarrow L(\Re)\cup
\{\emptyset\}$ is a reduction function if $\phi(N)\subseteq N\ $and
$N\subseteq M\ $implies $\phi(N)\subseteq\phi(M)\ $for each $N,M\in L(\Re)$.
We\ investigate the attitude of $\phi$-$\delta$-primary hyperideal under
homomorphism, in quotient ring, in cartesian product and other cases (See;
Theorem \ref{thom}, Proposition \ref{propcart}, Proposition \ref{ploc},
Theorem \ref{thcart}).

\section{{ Generalizations of Prime Hyperideals in Krasner Hyperrings}}

Throughout this section, $(\Re,\oplus,\circ)$ is a commutative Krasner
hyperring with nonzero identity. We denote the set of all hyperideals of $\Re$ by
$L(\Re)$. Initially, we give the definition of $\phi$-prime hyperideal and
some examples.

\begin{definition}
Let $\phi$ be a function such that $\phi:L(\Re)\rightarrow L(\Re
)\cup\{\emptyset\}$ and $N$ be a hyperideal of $\Re.$ $N$ is said to be $\phi
$-prime hyperideal of $\Re$ if $a\circ b\in N-$ $\phi(N),$ then $a\in N$ or
$b\in N,$ for $a,b\in\Re.$
\end{definition}

\begin{example}
\label{exam1} Let $\Re$ be a commutative Krasner hyperring. Consider the
following functions $\phi:L(\Re)\rightarrow L(\Re)\cup\{\emptyset\}$ defined
as follows: for any $N\in L(\Re):$

(i) $\phi_{\oslash}(N)=\emptyset$.

(ii) $\phi_{0}(N)=0.$

(iii) $\phi_{2}(N)=N^{2}.$

(iv) $\phi_{n}(N)=N^{n},$ (for any $n\geq2).\ \ \qquad$

(v) $\phi_{w}(N)=\cap_{n=1}^{\infty}N^{n}.\qquad$

(vi) $\phi_{1}(N)=N.$
\end{example}

It is obvious that $\phi_{\oslash}\leq\phi_{0}\leq\phi_{w}\leq...\leq
\phi_{n+1}\leq\phi_{n}\leq\phi_{n-1}\leq...\leq\phi_{2}\leq\phi_{1}.$

\begin{example}
\label{ex1prime} Let $\Re$ be a hyperring and $N\ $be a proper hyperideal of
$\Re. $

(i) $N$ is prime hyperideal if and only if it$\ $is $\phi_{\emptyset}$-prime hyperideal.

(ii) $N $is weakly prime hyperideal if and only if it$\ $is $\phi_{0}$-prime hyperideal.

(iii) $N$ is almost prime hyperideal if and only if it$\ $is $\phi_{2}$-prime hyperideal.

(iv) $N$ is $n$-almost prime hyperideal if and only if it$\ $is $\phi_{n}%
$-prime hyperideal.

(v)\ $N$ is $w$-prime hyperideal if and only if it is $\phi_{w}$-prime hyperideal.
\end{example}

\begin{proposition}
Let $N$ be a proper hyperideal of $\Re.$

$(1)$ If $\sigma_{1},$ $\sigma_{2}$\ are two functions with $\sigma_{1}\leq$
$\sigma_{2}$ such that $\sigma_{1},\sigma_{2}:L(\Re)\rightarrow L(\Re
)\cup\{\emptyset\}$ and $N$ is $\sigma_{1}$-prime, then $N$ is $\sigma_{2}$-prime.

$(2)(i)$ $N$ is prime hyperideal $\Longrightarrow$ $N$\ is weakly prime
hyperideal $\Longrightarrow$ $N$ is $w$-prime hyperideal $\Longrightarrow$ $N$
is $(n+1)$-almost prime hyperideal $\Longrightarrow$ $N$ is $n$-almost prime
hyperideal for any $n\geq2\Longrightarrow$ $N$ is almost prime hyperideal.

$(ii)$ $N$ is $w$-prime hyperideal if and only if $N$ is $n$-almost prime for
any $n\geq2.$
\end{proposition}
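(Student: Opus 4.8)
The plan is to establish part $(1)$ directly from the definition of a $\phi$-prime hyperideal and then to derive parts $(2)(i)$ and $(2)(ii)$ as consequences of it, using the dictionary of Example~\ref{ex1prime} together with the chain of inequalities recorded after Example~\ref{exam1}. For part $(1)$ the one thing to notice is that $\sigma_{1}\leq\sigma_{2}$ means $\sigma_{1}(N)\subseteq\sigma_{2}(N)$, whence $N-\sigma_{2}(N)\subseteq N-\sigma_{1}(N)$. So if $a,b\in\Re$ satisfy $a\circ b\in N-\sigma_{2}(N)$, then already $a\circ b\in N-\sigma_{1}(N)$, and $\sigma_{1}$-primeness of $N$ forces $a\in N$ or $b\in N$. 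This is the complete argument for $(1)$, and it is entirely routine.

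Part $(2)(i)$ then follows by feeding the chain
\[
\phi_{\oslash}\leq\phi_{0}\leq\phi_{w}\leq\cdots\leq\phi_{n+1}\leq\phi_{n}\leq\cdots\leq\phi_{2}\leq\phi_{1}
\]
into part $(1)$ one link at a time. By Example~\ref{ex1prime}, the notions prime, weakly prime, $w$-prime, $(n+1)$-almost prime, $n$-almost prime, and almost prime are exactly $\phi_{\oslash}$-prime, $\phi_{0}$-prime, $\phi_{w}$-prime, $\phi_{n+1}$-prime, $\phi_{n}$-prime, and $\phi_{2}$-prime. Hence each arrow in the displayed implication string is precisely the instance of $(1)$ for the corresponding consecutive pair $\sigma_{1}\leq\sigma_{2}$, and nothing further is required.

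For part $(2)(ii)$ the forward direction is once more an instance of $(1)$: since $\phi_{w}(N)=\bigcap_{m=1}^{\infty}N^{m}\subseteq N^{n}=\phi_{n}(N)$ for every $n$, any $\phi_{w}$-prime hyperideal is $\phi_{n}$-prime, that is, $n$-almost prime, for all $n\geq2$. The converse is the only genuinely non-formal step. Assuming $N$ is $n$-almost prime for every $n\geq2$, take $a,b\in\Re$ with $a\circ b\in N-\phi_{w}(N)$. Then $a\circ b\notin\bigcap_{m=1}^{\infty}N^{m}$, so $a\circ b\notin N^{n}$ for some index $n$; since $a\circ b\in N=N^{1}$, this failing index satisfies $n\geq2$. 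Therefore $a\circ b\in N-N^{n}=N-\phi_{n}(N)$, and $n$-almost primeness gives $a\in N$ or $b\in N$, proving $N$ is $w$-prime. I expect the main point to watch is exactly this localization of a failure of membership in the infinite intersection $\bigcap_{m}N^{m}$ to a single power $N^{n}$ with $n\geq2$ (ruling out $n=1$, which is blocked by $a\circ b\in N$); all the remaining content is bookkeeping across the reduction functions $\phi$.
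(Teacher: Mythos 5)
Your proposal is correct and follows essentially the same route as the paper: part (1) by the inclusion $N-\sigma_{2}(N)\subseteq N-\sigma_{1}(N)$, part (2)(i) by chaining (1) along $\phi_{\oslash}\leq\phi_{0}\leq\phi_{w}\leq\cdots\leq\phi_{2}\leq\phi_{1}$, and part (2)(ii) by localizing a failure of membership in $\bigcap_{m=1}^{\infty}N^{m}$ to a single power $N^{n}$ with $n\geq2$ (the case $n=1$ excluded since $a\circ b\in N$). If anything, your write-up of the converse in (2)(ii) states the quantifiers more carefully than the paper's somewhat garbled phrasing, but the underlying argument is identical.
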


\begin{proof}
$(1)$ Let us assume $N$ is a $\sigma_{1}$-prime hyperideal of $\Re$. Take $a\circ b\in N$ and $a\circ b\notin\sigma_{2}(N),$ for $a,b\in\Re.$ It follows $a\circ b\notin\sigma_{1}(N)$ because of $\sigma_{1}\leq$ $\sigma_{2}.$
Therefore $a\circ b\in N-\sigma_{1}(N)$ and on the assumption that $N$ is
$\sigma_{1}$-prime hyperideal, then $a\in N$ or $b\in N$. That means $N$ is
$\sigma_{2}$-prime hyperideal of $\Re$.

$(2)(i)$ We can obtain it by the ordering of the $\phi_{i}^{\prime}$s in
Example \ref{exam1} with the proof of 1.

$(ii)$ Assume that $N$ is $w$-prime. Let $a\circ b\in N-\phi_{w}%
(N)=N-\cap_{n=1}^{\infty}N^{n}$. Then $a\circ b\in N-\phi_{n}(N)$, since
$\phi_{w}\leq\phi_{n},$ from (1) $N$ is $\phi_{n}$-prime. It means $N$ is
$n$-almost prime, for all $n\geq2.$ Conversely, suppose that $N$ is $n$-almost
prime, for $a\circ b\in N-N^{n}$ for all $n\geq2$. Thus $a\circ b\in
N-\cap_{n=2}^{\infty}N^{n}.$ Hence $a\circ b\in N-\cap_{n=1}^{\infty}N^{n}.$
Therefore $a\in N$ or $b\in N$, since $N$ is $n$-almost prime.
\end{proof}

\begin{theorem}
\label{thm5} Let $\phi:L(\Re)\rightarrow L(\Re)\cup\{\emptyset\}$ be a
function and $T~$be a proper hyperideal of $\Re$ such that $T~$ is a $ \phi$-prime
hyperideal of $\Re$. If $T$ is not prime, then $T^{2}\subseteq$ $\phi(T).$
Hence at the same time it means if $T^{2}\nsubseteq$ $\phi(T),$ then $T$ is prime.
\end{theorem}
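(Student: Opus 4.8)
The plan is to prove the contrapositive stated in the last sentence of the theorem: assuming $T$ is $\phi$-prime and $T^{2}\nsubseteq\phi(T)$, I would show that $T$ is prime. So let $a,b\in\Re$ with $a\circ b\in T$; the goal is to deduce $a\in T$ or $b\in T$. If $a\circ b\notin\phi(T)$, then $a\circ b\in T-\phi(T)$ and the $\phi$-prime hypothesis finishes immediately, so the entire difficulty is concentrated in the case $a\circ b\in\phi(T)$. The idea, adapting the classical Anderson--Bataineh argument for rings, is to perturb $a$ or $b$ by a suitable element of $T$ so as to land a product in $T-\phi(T)$ and then invoke $\phi$-primeness.

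First I would record an auxiliary observation that replaces the trivial ring computation ``$p\notin\phi(T)$, $q\in\phi(T)$ $\Rightarrow p+q\notin\phi(T)$'': if $q_{1},\dots,q_{k}\in\phi(T)$ and $p\notin\phi(T)$, then the hypersum $p\oplus q_{1}\oplus\cdots\oplus q_{k}$ contains at least one element lying outside $\phi(T)$. This follows from reversibility (axiom $v)$) of the canonical hypergroup $(\Re,\oplus)$: if every element $r$ of that hypersum lay in $\phi(T)$, then $p\in(-q_{1})\oplus\cdots\oplus(-q_{k})\oplus r\subseteq\phi(T)$, using that $\phi(T)$ is a hyperideal closed under negation and $\oplus$ (the case $\phi(T)=\emptyset$ being vacuous, since then $a\circ b\in\phi(T)$ cannot occur), contradicting $p\notin\phi(T)$.

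With this in hand I would split the case $a\circ b\in\phi(T)$ into three. If $a\circ T\nsubseteq\phi(T)$, pick $t\in T$ with $a\circ t\notin\phi(T)$ and expand $a\circ(b\oplus t)=(a\circ b)\oplus(a\circ t)$; this set lies in $T$ (both products are in $T$) and, by the auxiliary observation, meets $T-\phi(T)$, so some $c\in b\oplus t$ satisfies $a\circ c\in T-\phi(T)$. Then $\phi$-primeness gives $a\in T$ or $c\in T$, and in the latter case reversibility yields $b\in c-t\subseteq T$. The case $b\circ T\nsubseteq\phi(T)$ is symmetric. In the remaining case $a\circ T\subseteq\phi(T)$ and $b\circ T\subseteq\phi(T)$, I would finally use the hypothesis $T^{2}\nsubseteq\phi(T)$ to choose $x,y\in T$ with $x\circ y\notin\phi(T)$ and expand $(a\oplus x)\circ(b\oplus y)=(a\circ b)\oplus(a\circ y)\oplus(x\circ b)\oplus(x\circ y)$; the first three products lie in $\phi(T)$ while $x\circ y\notin\phi(T)$, so once more the set is contained in $T$ and meets $T-\phi(T)$ at some $u\circ v$ with $u\in a\oplus x$ and $v\in b\oplus y$. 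Applying $\phi$-primeness and then reversibility ($a\in u-x\subseteq T$ or $b\in v-y\subseteq T$) forces $a\in T$ or $b\in T$, completing the proof.

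The main obstacle, and the only genuinely hyperstructural point, is precisely the auxiliary observation of the second paragraph: in a hyperring the expressions $a\circ(b\oplus t)$ and $(a\oplus x)\circ(b\oplus y)$ are \emph{sets} rather than single elements, so I cannot simply conclude that one representative sum avoids $\phi(T)$. Reversibility is what lets me extract a single witnessing element ($a\circ c$, respectively $u\circ v$) lying in $T-\phi(T)$, and then correctly pull $a$ or $b$ back out of the perturbation via $c-t$ (respectively $u-x$, $v-y$); tracking exactly which element does the job is where the care is needed, whereas the ring-theoretic skeleton of the three-case split is otherwise routine.
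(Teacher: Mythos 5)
Your proposal is correct and follows essentially the same route as the paper's own proof: the contrapositive with the identical three-case perturbation argument (the case $a\circ T\nsubseteq\phi(T)$, its symmetric counterpart, and the final case $a\circ T,\,b\circ T\subseteq\phi(T)$ resolved by choosing $x,y\in T$ with $x\circ y\notin\phi(T)$). If anything you are more careful than the paper, which asserts $x\circ(y\oplus m)\subseteq T-\phi(T)$ and $(x\oplus n)\circ(y\oplus k)\subseteq T-\phi(T)$ with no justification, whereas your reversibility lemma supplies exactly the missing hyperstructural step; the only slip is that in the last case $(a\oplus x)\circ(b\oplus y)$ is in general only \emph{contained in} the four-term hypersum (the products run over the diagonal $u=u'$), but this is harmless since your own reversibility computation, applied to a single element $r\in\phi(T)$, in fact shows that \emph{every} element of the hypersum avoids $\phi(T)$, so every product $u\circ v$ with $u\in a\oplus x$, $v\in b\oplus y$ lies in $T-\phi(T)$ and the witness extraction goes through.
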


\begin{proof}
Assume that $T^{2}\nsubseteq$ $\phi(T).~$We need to prove that $T$ is prime.
Take $x\circ y\in T$ for $x,y\in\Re.$ If $x\circ y\notin\phi(T),$ then $x\circ
y\in T-\phi(T).$ Since $T$ is $\phi$-prime, then $x\in T$ or $y\in T.$ Let us suppose
$x\circ y\in\phi(T)$. We can presume $x\circ T\nsubseteq$ $\phi(K)$, so
$x\circ m\notin\phi(T)$ for some $m\in T.$ Then $x\circ(y\oplus m)\subseteq
T-\phi(T).$ So $x\in T$ or $y\oplus m\subseteq T$, since $T~$is $\phi$-prime
hyperideal and hence $x\in T$ or $y\in T.$ Now on we can assume that $x\circ
T\subseteq$ $\phi(T).$ (Similarly, $y\circ T\subseteq$ $\phi(T).)$ We can find
some $n,k\in T$ with $n\circ k\notin\phi(T)$ because of $T^{2}\nsubseteq$
$\phi(T).$ Then $(x\oplus n)\circ(y\oplus k)\subseteq T-\phi(T).$ Since $T$ is
$\phi$-prime hyperideal, then $(x\oplus n)\subseteq T$ or $(y\oplus
k)\subseteq T.$ So we have $x\in T$ or $y\in T.$ Hence $T$ is prime hyperideal
of $\Re.$
\end{proof}

\begin{corollary}
\label{corol1} If $T$ is a $\phi$-prime hyperideal of $\Re$ with $\phi
\leq\phi_{3}$, then $T$ is $w$-prime hyperideal.
\end{corollary}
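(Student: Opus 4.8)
The plan is to split on whether $T$ is prime and, in the non-prime case, to use Theorem \ref{thm5} together with the hypothesis $\phi\leq\phi_{3}$ to pin down $\phi(T)$ exactly. If $T$ is prime, then the chain of implications established earlier in the Proposition, namely prime $\Rightarrow$ weakly prime $\Rightarrow$ $w$-prime, immediately gives that $T$ is a $w$-prime hyperideal, so nothing further is needed.

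Suppose instead that $T$ is not prime. Since $T$ is $\phi$-prime, Theorem \ref{thm5} yields $T^{2}\subseteq\phi(T)$. The hypothesis $\phi\leq\phi_{3}$ means $\phi(T)\subseteq\phi_{3}(T)=T^{3}$, and combining this with the trivial containment $T^{3}\subseteq T^{2}$ produces the sandwich $T^{2}\subseteq\phi(T)\subseteq T^{3}\subseteq T^{2}$. Hence $\phi(T)=T^{2}=T^{3}$.

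From $T^{2}=T^{3}$ one gets $T^{n}=T^{2}$ for all $n\geq 2$ by induction, so $\phi_{w}(T)=\bigcap_{n=1}^{\infty}T^{n}=T\cap T^{2}=T^{2}=\phi(T)$. Since the $\phi$-prime and $\phi_{w}$-prime conditions for the fixed hyperideal $T$ refer only to the sets $\phi(T)$ and $\phi_{w}(T)$, and these now coincide, the assumption that $T$ is $\phi$-prime is exactly the statement that $T$ is $\phi_{w}$-prime; by Example \ref{ex1prime}(v) this says $T$ is $w$-prime. I expect the only delicate point to be the collapse of the chain $T^{2}\subseteq\phi(T)\subseteq T^{3}$: a direct comparison of $\phi$ with $\phi_{w}$ is unavailable, because $\phi(T)$ and $\phi_{w}(T)$ need not be comparable in general, and it is precisely the non-prime hypothesis, supplied through Theorem \ref{thm5}, that forces the two reductions to agree on $T$.
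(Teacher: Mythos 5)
Your proof is correct and follows essentially the same route as the paper: split on whether $T$ is prime, and in the non-prime case use Theorem \ref{thm5} together with $\phi\leq\phi_{3}$ to collapse the sandwich $T^{2}\subseteq\phi(T)\subseteq T^{3}$, forcing $\phi(T)=T^{n}$ for all $n\geq 2$. The only cosmetic difference is the finish: the paper concludes via the equivalence of $w$-prime with $n$-almost prime for every $n\geq 2$, while you compute $\phi_{w}(T)=T^{2}=\phi(T)$ directly and observe that the $\phi$-prime condition on the fixed hyperideal $T$ depends only on the set $\phi(T)$.
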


\begin{proof}
We know that $T$ is $\phi$-prime hyperideal while $T$ is prime, for every
$\phi.$ Therefore $T$ is $w$-prime hyperideal of $\Re$. Assume that $T~$is not
prime. By \ref{thm5} $T^{2}\subseteq$ $\phi(T)\subseteq$ $T^{3}.$ So
$\phi(T)=T^{n}$ for every $n\geq2$ . Therefore $T$ is $n$-almost prime
hyperideal for every $n\geq2$. Hence $T$ is $w$-prime hyperideal of $\Re$.
\end{proof}

Some characterizations of $\phi$-prime hyperideals are provided.

\begin{theorem}
\label{Thmchar} Let $N$ be a proper hyperideal of the commutative
Krasner hyperring $\Re$ and let $\phi:L(\Re)\rightarrow L(\Re)\cup
\{\emptyset\}$ be a function. Then the following statements hold:

(i) $N$ is $\phi$-prime hyperideal of $\Re$.

(ii) For $a\in\Re-N$, $(N:a)=N\cup(\phi(N):a)$.

(iii) For $a\in\Re-N$, $(N:a)=N$ or $(N:a)=(\phi(N):a)$.

(iv) For each hyperideals $K$ and $L$ of $\Re$ such that $K\circ L\subseteq N$
and $K\circ L\nsubseteq\phi(N)$, we have $K\subseteq N$ or $L\subseteq N.$
\end{theorem}

\begin{proof}
$(i)\Longrightarrow(ii)$ Take $a\in\Re-N.$ Suppose that $b\in(N:a),$ then
$a\circ b\in N.$ If$\ a\circ b\notin\phi(N),$ then$\ b\in N$ since $N$ is
$\phi$-prime hyperideal of $\Re.$ If $a\circ b\in\phi(N)$, then $b\in
(\phi(N):a).$ Hence $(N:a)\subseteq N\cup(\phi(N):a)$. Other side holds since
the assumption of $\phi(N)\subseteq N.$

$(ii)\Longrightarrow(iii)$ It is obvious because of $(N:a)$ is a hyperideal of
$\Re$.

$(iii)\Longrightarrow(iv)$ Let $K$ and $L$ be hyperideals of $\Re$ such that
$K\circ L\subseteq N$ $-\phi(N).$ Assume that $K\nsubseteq N.$ Then there
exists an element $a\in K-N,$ by $(iii)$ we have $(N:a)=N$ or $(N:a)=(\phi
(N):a).$ If $a\circ L\nsubseteq\phi(N)$, then $L\nsubseteq(\phi(N):a).$ Since
$(iii)$ holds, then $L\subseteq(N:a)=N.$ We are done. Assume that $a\circ L\subseteq
\phi(N).$ Since $K\circ L\nsubseteq\phi(N),$ then we can choose an element $x\in K$ such
that $x\circ L\subseteq N-\phi(N).$ If $x\notin N,$ then by $(iii)$, $(N:x)=N$ or
$(N:x)=(\phi(N):x).$ Since $L\subseteq(N:x)$, but $L\nsubseteq(\phi(N):x)$, then we
conclude that $L\subseteq(N:x)=N.$ Assume that $x\in N$. We have
$a\oplus x\subseteq K-N,$ and also note that $(a\oplus x)\circ L\subseteq
N-\phi(N)$, since $a\circ L\subseteq\phi(N)$ and $x\circ L\nsubseteq\phi(N).$
Then $L\subseteq(N:a\oplus x)=N$ which completes the proof.

$(iv)\Longrightarrow(i)$ Let $a\circ b\in N-\phi(N).~$Then $(a)\circ
(b)\subseteq N,$ but $(a)\circ(b)\nsubseteq\phi(N).$ So $(a)\subseteq N$ or
$(b)\subseteq N.$ Then $a\in N$ or $b\in N.$
\end{proof}

In Theorem \ref{thm5} we show that if $T$ is a $\phi$-prime hyperideal and
$T$ is not prime, then $T^{2}\subseteq$ $\phi(T).$ 

\begin{corollary}
Let $T~$be a $\phi$-prime hyperideal that is not prime. Then $T\circ
\sqrt{\phi(T)}\subseteq$ $\phi(T)$.
\end{corollary}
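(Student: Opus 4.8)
The plan is to leverage Theorem \ref{thm5}: since $T$ is $\phi$-prime but not prime, we immediately have $T^{2}\subseteq\phi(T)$, so any $\circ$-product of two elements of $T$ already lands in $\phi(T)$. I fix $t\in T$ and $a\in\sqrt{\phi(T)}$ and aim to prove $t\circ a\in\phi(T)$, which is exactly the claimed containment. If $a\in T$ this is trivial, because $t\circ a\in T^{2}\subseteq\phi(T)$; hence the entire content of the corollary is the case $a\notin T$, and I would organize the proof around that.

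Assume $a\notin T$. Since $a\in\sqrt{\phi(T)}$, some power $a^{m}\in\phi(T)\subseteq T$, so there is a least integer $k\geq 2$ with $a^{k}\in T$ while $a^{k-1}\notin T$. The first real step is to upgrade $a^{k}\in T$ to $a^{k}\in\phi(T)$: writing $a^{k}=a\circ a^{k-1}$, if $a^{k}$ lay outside $\phi(T)$ then $a^{k}\in T-\phi(T)$, and $\phi$-primeness would force $a\in T$ or $a^{k-1}\in T$, both excluded by the minimal choice of $k$. Thus $a^{k}\in\phi(T)$.

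Now suppose, for contradiction, that $t\circ a\notin\phi(T)$. Since $t\in T$ we still have $t\circ a\in T$, so $t\circ a\in T-\phi(T)$. The key manipulation is to expand, using distributivity, $a\circ(t\oplus a^{k-1})=(t\circ a)\oplus a^{k}$. Both $t\circ a$ and $a^{k}$ lie in $T$, so this set is contained in $T$; but it cannot be contained in $\phi(T)$, since otherwise, choosing any element $c$ of it and invoking reversibility together with $a^{k}\in\phi(T)$ and the fact that $\phi(T)$ is a hyperideal, we would get $t\circ a\in c\oplus(-a^{k})\subseteq\phi(T)$, contrary to assumption. Hence some $s\in t\oplus a^{k-1}$ satisfies $a\circ s\in T-\phi(T)$; applying $\phi$-primeness gives $a\in T$ or $s\in T$, and since $a\notin T$ we conclude $s\in T$. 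Finally, $s\in t\oplus a^{k-1}$ with $t,s\in T$ forces, again by reversibility, $a^{k-1}\in s\oplus(-t)\subseteq T$, contradicting $a^{k-1}\notin T$. This contradiction yields $t\circ a\in\phi(T)$, and therefore $T\circ\sqrt{\phi(T)}\subseteq\phi(T)$.

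I expect the main obstacle to be the hyperstructure bookkeeping in the third paragraph: correctly expanding the single-valued product over the hypersum $t\oplus a^{k-1}$ by distributivity, justifying that $(t\circ a)\oplus a^{k}\nsubseteq\phi(T)$, and then extracting a single element $s$ of $t\oplus a^{k-1}$ to which the single-valued $\phi$-prime condition can be applied. The reversibility axiom of the canonical hypergroup is precisely what legitimizes both the step of pulling $t\circ a$ back out of the hypersum and the step of recovering $a^{k-1}$, so I would be careful to invoke it explicitly at each of these two points.
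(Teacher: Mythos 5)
Your proof is correct, but it takes a genuinely different route from the paper's. The paper handles $a\in\sqrt{\phi(T)}$ with $a\notin T$ by invoking the characterization Theorem \ref{Thmchar}: either $(T:a)=(\phi(T):a)$, in which case $T\subseteq(T:a)$ immediately gives $a\circ T\subseteq\phi(T)$, or $(T:a)=T$, which is ruled out by a descent on the minimal $n$ with $a^{n}\in\phi(T)$ (from $a^{n}\in T$ one gets $a^{n-1}\in(T:a)=T$, hence $a^{n-1}\in T-\phi(T)$, and ultimately the contradiction $a\in T$). You bypass the colon-ideal machinery entirely: you pivot on the minimal $k$ with $a^{k}\in T$ rather than $a^{n}\in\phi(T)$, first upgrade $a^{k}\in T$ to $a^{k}\in\phi(T)$ by one application of $\phi$-primeness, and then run the perturbation $a\circ(t\oplus a^{k-1})=(t\circ a)\oplus a^{k}$ --- essentially the same additive-translation trick the paper uses to prove Theorem \ref{thm5} itself --- invoking reversibility at exactly the two points you flagged. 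All the hyperstructure steps check out: $(t\circ a)\oplus a^{k}\subseteq T$ since $T$ is closed under $\oplus$; nonemptiness of hypersums, reversibility, and the fact that $\phi(T)$ is a hyperideal legitimately exclude $(t\circ a)\oplus a^{k}\subseteq\phi(T)$; and extracting a single $s\in t\oplus a^{k-1}$ with $a\circ s\in T-\phi(T)$ is valid because multiplication in a Krasner hyperring is single-valued, so $a\circ(t\oplus a^{k-1})$ is the elementwise union of the products $a\circ s$. What each approach buys: the paper's argument is shorter given that Theorem \ref{Thmchar} is already available, while yours is self-contained at the element level and makes explicit the bookkeeping the paper buries inside that characterization. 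One shared tacit assumption worth noting: both arguments use $\phi(T)\subseteq T$ (stated in the paper only inside the proof of Theorem \ref{Thmchar}) and that $\phi(T)\neq\emptyset$; the latter is harmless here, since $\phi(T)=\emptyset$ would make the $\phi$-prime hyperideal $T$ prime, contradicting the hypothesis of the corollary.
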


\begin{proof}
Let $a\in\sqrt{\phi(T)}.$ If$ a\in T$, then $a\circ T\subseteq T^{2}\subseteq\phi(T)$
from the Theorem \ref{thm5}. Let us suppose that $a\notin T.$ From the main
Theorem \ref{Thmchar}, $(T:a)=T$ or $(T:a)=(\phi(T):a)$ as $T\subseteq(T:a)$
gives $a\circ T\subseteq\phi(T).$ Let us suppose $(T:a)=T.$ Assume that $a^{n}%
\in\phi(T)$ but $a^{n-1}\notin\phi(T).$ Then $a^{n}\in T$, so $a^{n-1}%
\in(T:a)=T.$ Hence $a^{n-1}\in T-\phi(T)$, so $a\in T,$ which is a
contradiction. That proof is similar to Corollary 14\cite{2}.
\end{proof}

In the following, we give a proposition about quotient and localization of a hyperring.
Let $S$ be a multiplicatively closed subset of a Krasner hyperring $\Re$%
. Define $\phi:L(\Re)\rightarrow L(\Re)\cup\{\emptyset\}$ and $\phi
_{S}:L(\Re_{S})\rightarrow L(\Re_{S})\cup\{\emptyset\}$ with $\phi
_{S}(M)=(\phi(M\cap\Re))_{S}.$ Also $\phi_{S}(M)=\emptyset$, where $\phi
(M\cap\Re)=\emptyset.~$

Let $N,M$ be hyperideals of $\Re$ and $N\subseteq M,$ define $\phi_{M}%
:L(\Re/M)\rightarrow L(\Re/M)\cup\{\emptyset\}$ with\ $\phi_{M}(N/M)=(\phi
(N)\oplus M)/M.$ Also $\phi_{M}(N/M)=\emptyset~$where $\phi(N)=\emptyset.$

\begin{proposition}
Suppose that $\phi:L(\Re)\rightarrow L(\Re)\cup\{\emptyset\}$ is a function
and $T~$is a$\ \phi$-prime hyperideal of $\Re$.

(i) If $M$ is a hyperideal of $\Re$ with $M\subseteq T,$ then $T/M$ is
$\phi_{M}$-prime hyperideal of $\Re/M.$

(ii) Let $S$ be a multiplicatively closed subset of $\Re$ with $T\cap
S=\emptyset$ and $\phi(T)_{S}\subseteq\phi_{S}(T_{S}).$ Then $T_{S}$ is a
$\phi_{S}$-prime hyperideal of $\Re_{S}.$
\end{proposition}

\begin{proof}
(i) Let $x,y\in\Re.$ Suppose that $(x\oplus M)\circ(y\oplus M)\subseteq
T/M-\phi_{M}(T/M).$ So $x\circ y\oplus M\subseteq T/M-(\phi(T\oplus M))/M.$
Then $x\circ y\in T-\phi(T\oplus M).~$We find that $x\circ y\in T-\phi(T)$ and
then $x\in T$ or $y\in T$. Therefore $x\oplus M\subseteq T/M$ or $y\oplus
M\subseteq T/M.$ So $T/M$ is $\phi_{M}$-prime hyperideal\ of $\Re/M$.

(ii) Let $\frac{a}{s}\circ\frac{b}{t}\in T_{S}-\phi_{S}(T_{S}),$ for some
$a,b\in\Re;\ s,t\in S.\ $Then we have $p\circ a\circ b\in T$ for some $p\in S$
but $q\circ a\circ b\notin\phi(T)\cap\Re$ for every $q\in S.$ Now if $q\circ
a\circ b\in\phi(T),$ then $\frac{a}{s}\circ\frac{b}{t}\in\phi(T)_{S}%
\subseteq\phi_{S}(T_{S})$ which is a contradiction. So $p\circ a\circ b\in
T-\phi(T)~$and since $T$ is $\phi$-prime hyperideal of $\Re$, then we get either $p\circ a\in T$ or $b\in T.$ Hence $\frac{a}{s}\in T_{S}$ or
$\frac{b}{t}\in T_{S}$, since $T\cap S=\emptyset.$ Thus $T_{S}$ is $\phi_{S}%
$-prime hyperideal of $\Re_{S}$.
\end{proof}

\begin{theorem}
(i) Let $X$ and $Y$ be commutative Krasner hyperrings and $N$ be a weakly prime
hyperideal of $X$. Then $M=N\otimes Y~$is a $\phi$-prime hyperideal of
$\Re=X\otimes Y$, for all $\phi$ with $\phi_{w}\leq\phi\leq\phi_{1}.$

(ii) Suppose $\Re$ is a commutative Krasner hyperring and $M$ is a finitely
generated proper hyperideal of $\Re$. Assuming $M\ $is a $\phi$-prime
hyperideal with $\phi\leq\phi_{3}.~$Then $M$ is either weakly prime or
$M^{2}\neq0$ is idempotent and $\Re$ decomposes as $X\otimes Y$ where
$Y=M^{2}$ and $M=N\otimes Y$ with $N$ be weakly prime. As a result $M$ is
$\phi$-prime for each $\phi$ with $\phi_{w}\leq\phi\leq\phi_{1}.$
\end{theorem}

\begin{proof}
(i) Let $N$ be a weakly prime hyperideal of $X$. Then $M=N\otimes Y~$not
necessarily to be a weakly prime hyperideal of $\Re=X\otimes Y;$ actually $M$
is weakly prime hyperideal if and only if $M~$is prime hyperideal.
Nevertheless $M~$is a $\phi$-prime hyperideal for each $\phi$ with $\phi
_{w}\leq\phi.$ If $N$ is prime, then $M$ is prime and hence $\phi$-prime for
all $\phi.$ Assume that $N$ is not prime. Then $N^{2}=0.$ Therefore
$M^{2}=0\otimes Y$ and hereby $\phi_{w}(M)=0\otimes Y$. Hence we can write
$M-\phi_{w}(M)$ in this form $M-\phi_{w}(M)=N\otimes Y-0\otimes
Y=(N-\{0\})\otimes Y.$ Let take $(a_{1},a_{2})\circ(b_{1},b_{2})=(a_{1}\circ
b_{1},a_{2}\circ b_{2})\in$ $M-\phi_{w}(M)$ . It means $a_{1}\circ b_{1}\in
N-\{0\},$ so $a_{1}\in N$ or $b_{1}\in N.$ Then $(a_{1},a_{2})\in$ $M$ or
$(b_{1},b_{2})\in$ $M.$ Hence $M~$is $\phi_{w}$-prime, therefore $\phi$-prime hyperideal.

(ii) If $M$ is prime hyperideal, then $M$ is weakly prime hyperideal of $\Re$.
Suppose that $M$ is not prime. From Theorem \ref{thm5}, $M^{2}\subseteq$
$\phi(M);$ and hence $M^{2}\subseteq$ $\phi(M)\subseteq\phi_{3}(M)=M^{3}.$ So
$M^{2}=M^{3},$ it means $M^{2}$ is idempotent. Since $M^{2}$ is finitely
generated, then $M^{2}=(m)$ for some idempotent $m\in\Re.$ Assume that $M^{2}=0$.
Then $\phi(M)\subseteq$ $M^{3}=0$. Hereby $\phi(M)=0.$ Consequently $M$ is
weakly prime hyperideal of $\Re$. Now suppose $M^{2}\neq0.$ Take that
$Y=M^{2}=\Re\circ m$ and $X=\Re\circ(1\ominus m),$ so $\Re$ decomposes as
$X\otimes Y$ where $Y=M^{2}.~$Let $I=M\circ(1\ominus m),$ so $M=N\otimes Y$
where $N^{2}=(M\circ(1\ominus m))^{2}=M^{2}\circ(1\ominus m)^{2}%
=(m)\circ(1\ominus m)=0.$ To show $N$ is weakly prime hyperideal, let $a\circ
b\in N^{2}-\{0\};$ so $(a,1)\circ(b,1)=(a\circ b,1)\in$ $N\otimes Y-(N\otimes
Y)^{2}=N\otimes Y-0\otimes Y\subseteq M-\phi(M)$. Since $\phi\leq\phi_{3}$, then it follows $\phi(M)\subseteq$ $M^{3}=(N\otimes Y)^{3}=0\otimes Y$. We obtain that
$(a,1)\in$ $M$ or $(b,1)\in$ $M.$ Therefore $a\in$ $N$ or $b\in$ $N$. As a
consequence~$N$ is weakly prime hyperideal of $\Re$.
\end{proof}

\begin{proposition}
Let $\Re_{1}$ and $\Re_{2}$ be commutative Krasner hyperrings and let
$\varphi_{i}:L(\Re_{i})\rightarrow L(\Re_{i})\cup\{\emptyset\}$ be\ a function
for $i=1,2.$ Take $\phi=\varphi_{1}\times\varphi_{2}$ and $\Re=\Re_{1}%
\times\Re_{2}.$ Then $N$ is $\phi$-prime hyperideal of $\Re$ if and only if
$N$ is one of the following types:

(i) $N=N_{1}\times N_{2},\ $where $N_{i}\ $is a proper hyperideal of $\Re
_{i}\ $with $\varphi_{i}(N_{i})=N_{i}.$

(ii) $N=N_{1}\times\Re_{2},\ $where $N_{1}\ $is $\varphi_{1}$-prime hyperideal
of $\Re_{1}$ that should be prime if $\varphi_{2}(\Re_{2})\neq\Re_{2}.$

(iii)\ $N=\Re_{1}\times N_{2},\ $where $N_{2}\ $is $\varphi_{2}$-prime
hyperideal of $\Re_{2}$ that should be prime if $\varphi_{1}(\Re_{1})\neq
\Re_{1}.$
\end{proposition}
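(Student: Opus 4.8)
The plan is to prove both directions of the equivalence by exploiting the product structure of hyperideals in $\Re = \Re_1 \times \Re_2$. First I would recall the standard fact that every hyperideal $N$ of a product $\Re_1 \times \Re_2$ splits as $N = N_1 \times N_2$, where $N_i$ is a hyperideal of $\Re_i$ (here $N_i$ is the image of $N$ under the $i$-th projection, and one checks $N = N_1 \times N_2$ using that $(1,0)$ and $(0,1)$ are idempotents lying in $\Re$). Since $\phi = \varphi_1 \times \varphi_2$, we also have $\phi(N) = \varphi_1(N_1) \times \varphi_2(N_2)$, so the reduction function respects the splitting. Once this is set up, the whole argument reduces to tracking when the condition ``$a \circ b \in N - \phi(N) \Rightarrow a \in N$ or $b \in N$'' can hold for a product, and the proof becomes a case analysis on whether $N_1, N_2$ are proper or equal to the whole ring.

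For the forward direction I would argue as follows. Suppose $N = N_1 \times N_2$ is $\phi$-prime. The key structural observation is that $N - \phi(N) = (N_1 \times N_2) - (\varphi_1(N_1) \times \varphi_2(N_2))$, and an element $(a_1,a_2)\circ(b_1,b_2)$ lands in this difference set precisely when the product lies in $N$ but escapes $\phi(N)$ in at least one coordinate. If both $N_1$ and $N_2$ are proper, I would test the $\phi$-prime condition with carefully chosen elements—for instance $((n_1,u_2),(u_1,n_2))$ type products where $u_i$ are units in $\Re_i$—to force $\varphi_i(N_i) = N_i$ in both coordinates, giving case (i). If exactly one factor is improper, say $N_2 = \Re_2$, then $N = N_1 \times \Re_2$; here I would show the $\phi$-prime condition on $\Re$ collapses to the $\varphi_1$-prime condition on $N_1$ (by fixing the second coordinate to a unit), and the extra clause about primality when $\varphi_2(\Re_2) \neq \Re_2$ emerges from testing products whose second coordinate forces the $\phi(N)$ membership to fail. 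This yields case (ii), and case (iii) is symmetric.

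For the converse, each of the three cases must be shown to produce a $\phi$-prime hyperideal. Case (i) is the cleanest: if $\varphi_i(N_i) = N_i$ then $\phi(N) = N_1 \times N_2 = N$, so $N - \phi(N) = \emptyset$, and the $\phi$-prime implication is vacuously satisfied. For case (ii), given $N_1 \times \Re_2$ with $N_1$ being $\varphi_1$-prime (and prime when $\varphi_2(\Re_2)\neq \Re_2$), I would take $(a_1,a_2)\circ(b_1,b_2) \in N - \phi(N)$, project to the first coordinate to get $a_1 \circ b_1 \in N_1$, and then split on whether the ``escape'' from $\phi(N)$ happens in the first or second coordinate: if in the first, the $\varphi_1$-prime property of $N_1$ finishes it; if only in the second, that is exactly where the primality hypothesis on $N_1$ is needed to conclude $a_1 \in N_1$ or $b_1 \in N_1$. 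Case (iii) is again symmetric.

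The main obstacle I anticipate is the bookkeeping in the forward direction, specifically isolating precisely when the condition $\varphi_i(N_i)=N_i$ is forced versus when a mere $\varphi_i$-prime (or genuinely prime) condition suffices. The subtlety lies in the interaction between the two coordinates through the set difference $N - \phi(N)$: an element can fail to lie in $\phi(N)$ because of either coordinate, and one must choose test elements (using units and the idempotents $(1,0),(0,1)$) that activate exactly the coordinate whose behaviour one wishes to probe, while keeping the other coordinate inside its $\phi$-image. Getting the primality side-condition in cases (ii) and (iii) to match the hypothesis $\varphi_j(\Re_j)\neq\Re_j$ exactly is the delicate point, and I would handle it by contrasting the situation $\varphi_2(\Re_2)=\Re_2$ (where $\Re_2 - \varphi_2(\Re_2)=\emptyset$ removes the problematic test products) against $\varphi_2(\Re_2)\neq\Re_2$ (where such products survive and demand full primality of $N_1$).
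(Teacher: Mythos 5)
Your proposal is correct, and its overall architecture coincides with the paper's: decompose $N=N_1\times N_2$ with $\phi(N)=\varphi_1(N_1)\times\varphi_2(N_2)$, prove sufficiency of type (i) by the vacuity $N-\phi(N)=\emptyset$, and prove necessity by probing with test elements built from $0$'s and $1$'s (the paper uses $(a,0)\circ(b,0)$ to extract $\varphi_1$-primeness of $N_1$, $(1,0)\circ(b_1,b_2)$ to force $N_1=\Re_1$ or $N_2=\Re_2$, and $(x,1)\circ(m,m')$ with $m'\in\Re_2-\varphi_2(\Re_2)$ to get genuine primeness — in substance the same probes you sketch). Where you genuinely diverge, you are in fact more careful than the printed proof. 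In its sufficiency argument for type (ii) the paper writes $N_{1}\times\Re_{2}-\varphi_{1}(N_{1})\times\varphi_{2}(\Re_{2})=(N_{1}-\varphi_{1}(N_{1}))\times(\Re_{2}-\varphi_{2}(\Re_{2}))$, which is false as a set identity: the difference of two Cartesian products is $\bigl((N_1-\varphi_1(N_1))\times\Re_2\bigr)\cup\bigl(N_1\times(\Re_2-\varphi_2(\Re_2))\bigr)$, i.e., escape from $\phi(N)$ in at least one coordinate suffices. Consequently the paper only verifies the sub-case where both coordinates escape, and its sufficiency argument never actually invokes the primality side-condition on $N_1$. Your case split — $\varphi_1$-primeness handles escape in the first coordinate, while genuine primeness of $N_1$ is needed exactly when the escape occurs only in the second coordinate, which is possible precisely when $\varphi_2(\Re_2)\neq\Re_2$ — is the complete and correct treatment, and it is what explains why the side-condition appears in the statement at all. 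What the paper's shortcut buys is brevity; what your version buys is an actual proof of sufficiency in case (ii), so you should keep your decomposition of the difference set when writing this up in full.
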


\begin{proof}
($\Longrightarrow$) (i) Obviously $N$ is $\phi$-prime hyperideal, since
$N_{1}\times N_{2}-\phi(N_{1}\times N_{2})=\emptyset$

(ii) Let $N_{1}\ $is $\varphi_{1}$-prime hyperideal of $\Re_{1}$ and
$\varphi_{2}(\Re_{2})\neq\Re_{2}$. Assume that $(a_{1}\circ b_{1},a_{2}\circ
b_{2})=(a_{1},a_{2})\circ(b_{1},b_{2})\in N_{1}\times\Re_{2}-\varphi_{1}%
(N_{1})\times\varphi_{2}(\Re_{2})=(N_{1}-\varphi_{1}(N_{1}))\times(\Re
_{2}-\varphi_{2}(\Re_{2}))$ for $(a_{1},a_{2}),(b_{1},b_{2})\in\Re=\Re
_{1}\times\Re_{2}.~$Thus $a_{1}\circ b_{1}\in N_{1}-\varphi_{1}(N_{1})$ and then
either $a_{1}\in N_{1}$ or $b_{1}\in N_{1}$. So $(a_{1},a_{2})\in N_{1}\times\Re_{2}$ or $(b_{1}%
,b_{2})\in N_{1}\times\Re_{2}.$ Therefore $N_{1}\times\Re_{2}$ is $\phi$-prime
hyperideal of $\Re$.

(iii) The proof is similar to (ii).

($\Longleftarrow$) Assume that $N$ is a $\phi$-prime hyperideal of $\Re$ where
$\varphi_{i}(N_{i})\neq N_{i}.$ Let $a\circ b\in N_{1}-\varphi_{1}(N_{1})$ for
some $a,b\in\Re_{1}.$ So $(a,0)\circ(b,0)=(a\circ b,0)\in N-\phi(N).$ Since
$N$ is a $\phi$-prime hyperideal of $\Re,$ then $(a,0)\in N$ or $(b,0)\in N$.
So $a\in N_{1}$ or $b\in N_{1}.$ Therefore $N_{1}\ $is $\varphi_{1}$-prime
hyperideal of $\Re_{1}.~$Similarly we can find $N_{2}\ $is $\varphi_{2}$-prime
hyperideal of $\Re_{2}.$ Now we need to show that $N_{1}=\Re_{1}$ or
$N_{2}=\Re_{2}.$ Suppose that $N_{2}\not =\Re_{2}.$ Take $b_{1}\in
N_{1}-\varphi_{1}(N_{1}),$ $b_{2}\in\Re_{2}-N_{2}.$ Then note that
$(1,0)\circ(b_{1},b_{2})=(b_{1},0)\in N-\phi(N).$ Since $N_{2}\not =\Re_{2},$ then
$(1,0)\in N$ and so we find $1\in N_{1}.$ We get $N_{1}=\Re_{1}.$ Similarly
one can easily find $N_{2}=\Re_{2}$, if $N_{1}\not =\Re_{1}$. Without loss of
generality $N_{1}\not =\Re_{1}$. Now let we show that $N_{1}$ is prime hyperideal
with $\varphi_{2}(\Re_{2})\neq\Re_{2}$. For $m^{\prime}\in\Re_{2}-\varphi
_{2}(\Re_{2})$, let $x\circ m\in N_{1}$ for some $x,m\in\Re_{1}$. Then we
conclude that $(x,1)\circ(m,m^{\prime})=(x\circ m,m^{\prime})\in N-\phi(N)$.
Since $N$ is a $\phi$-prime hyperideal of $\Re$, then we find $(x,1)\in N$ or
$(m,m^{\prime})\in N$ which implies that $x\in N_{1}$ or $m\in N_{1}$.
Therefore $N_{1}$ is a prime hyperideal of $\Re_{1}.$ If $\varphi_{1}(\Re
_{1})\neq\Re_{1}$ and $N_{1}=\Re_{1}$, then similarly one can prove that
$N_{2}$ is a prime hyperideal of $\Re_{2}$.
\end{proof}

\section{Generalizations of Primary Hyperideals in Krasner Hyperrings}

Similar to the previous section, we consider $(\Re,\oplus,\circ)$ to be a
commutative Krasner hyperring with nonzero unit. We denote the set of all
hyperideals of $\Re$ by $L(\Re)$. Let define $\phi$-primary hyperideal.

\begin{definition}
Let $\Re$ be a commutative hyperring and $N$ be a proper hyperideal of$\ \Re.$
Let $\phi$ be a function such that $\phi:L(\Re)\rightarrow L(\Re
)\cup\{\emptyset\}$. $N$ is called $\phi$-primary hyperideal of $\Re$ if
$a\circ b\in N-$ $\phi(N),$ then either $a\in N$ or $b^{k}\in N$ for some
$a,b\in\Re,~k\in%
\mathbb{N}
$.
\end{definition}

\begin{example}
\label{exam2} Let $\Re$ be a commutative Krasner hyperring. Then we define
functions $\phi:L(\Re)\rightarrow L(\Re)\cup\{\emptyset\}$ such as in Example
\ref{exam1}. Also we have the same order, $\phi_{\oslash}\leq\phi_{0}\leq
\phi_{w}\leq...\leq\phi_{n+1}\leq\phi_{n}\leq\phi_{n-1}\leq...\leq\phi_{2}%
\leq\phi_{1}.$
\end{example}

\begin{example}
\label{ex1primary} Let $\Re$ be a hyperring and $N $be a proper hyperideal of
$\Re.$

(i) $N$ is primary hyperideal if and only if it is $\phi_{\emptyset}%
$-primary hyperideal.

(ii) $N $ is weakly primary hyperideal if and only if it is $\phi_{0}%
$-primary hyperideal.

(iii) $N$ is almost primary hyperideal if and only if it is $\phi_{2}%
$-primary hyperideal.

(iv) $N$ is $n$-almost primary hyperideal if and only if it is $\phi_{n}%
$-primary hyperideal.

(v) $N$ is $w$-primary hyperideal if and only if it is $\phi_{w}$-primary hyperideal.
\end{example}

\begin{proposition}
Let $N~$be a proper hyperideal of $\Re.$

(1) If $\sigma_{1},$ $\sigma_{2}$ are two functions with $\sigma_{1}\leq$
$\sigma_{2}$ such that $\sigma_{1},$ $\sigma_{2}:L(\Re)\rightarrow L(\Re
)\cup\{\emptyset\}$ and $N$ is $\sigma_{1}$-primary, then $N$ is $\sigma_{2}$-primary.

(2) (i) $N$ is primary hyperideal $\Longrightarrow$ $N$ is weakly primary
hyperideal $\Longrightarrow$ $N$ is $w$-primary hyperideal $\Longrightarrow$
$N$ is $(n+1)$-almost primary hyperideal $\Longrightarrow$ $N$ is $n$-almost
primary hyperideal, for $n\geq2\Longrightarrow$ $N$ is almost primary hyperideal.

\ \ \ (ii) $N$ is $w$-primary hyperideal if and only if $N$ is $n$-almost
primary hyperideal for all $n\geq2.$
\end{proposition}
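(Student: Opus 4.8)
The plan is to mirror the structure of the already-proven analogous proposition for $\phi$-prime hyperideals, since the two statements are structurally identical with ``prime'' replaced by ``primary'' and the conclusion $b\in N$ replaced by $b^{k}\in N$ for some $k\in\mathbb{N}$. For part (1), I would assume $N$ is $\sigma_{1}$-primary and take $a\circ b\in N-\sigma_{2}(N)$ for some $a,b\in\Re$. Because $\sigma_{1}\leq\sigma_{2}$, we have $\sigma_{1}(N)\subseteq\sigma_{2}(N)$, so $a\circ b\notin\sigma_{2}(N)$ forces $a\circ b\notin\sigma_{1}(N)$, whence $a\circ b\in N-\sigma_{1}(N)$. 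Applying the $\sigma_{1}$-primary hypothesis yields $a\in N$ or $b^{k}\in N$ for some $k$, which is exactly the condition for $N$ to be $\sigma_{2}$-primary. This is the same one-line containment argument used in the $\phi$-prime case.

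For part (2)(i), the chain of implications is obtained by reading off the ordering of the reduction functions from Example \ref{exam2}, namely $\phi_{\oslash}\leq\phi_{0}\leq\phi_{w}\leq\dots\leq\phi_{n+1}\leq\phi_{n}\leq\dots\leq\phi_{2}\leq\phi_{1}$, together with the dictionary supplied by Example \ref{ex1primary} identifying each named notion (primary, weakly primary, $w$-primary, $n$-almost primary, almost primary) with being $\phi_{i}$-primary for the corresponding $\phi_{i}$. Each single implication in the chain is then just an instance of part (1): a smaller $\sigma_{1}$ gives a stronger hypothesis, so $\sigma_{1}$-primary implies $\sigma_{2}$-primary whenever $\sigma_{1}\leq\sigma_{2}$. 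I would simply invoke part (1) repeatedly along the given ordering rather than re-proving each step.

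For part (2)(ii), I would argue both directions explicitly. For the forward direction, assume $N$ is $w$-primary, i.e. $\phi_{w}$-primary where $\phi_{w}(N)=\bigcap_{n=1}^{\infty}N^{n}$; since $\phi_{w}\leq\phi_{n}$ for every $n\geq2$, part (1) immediately gives that $N$ is $\phi_{n}$-primary, i.e. $n$-almost primary, for all $n\geq2$. For the converse, suppose $N$ is $n$-almost primary for every $n\geq2$ and take $a\circ b\in N-\phi_{w}(N)=N-\bigcap_{n=1}^{\infty}N^{n}$. I would observe that $\bigcap_{n=1}^{\infty}N^{n}=\bigcap_{n=2}^{\infty}N^{n}$ (since $N^{1}=N\supseteq N^{2}$ contributes nothing new to the intersection), so $a\circ b\in N-\bigcap_{n=2}^{\infty}N^{n}$, which means $a\circ b\notin N^{n}$ for at least one $n\geq2$; for that $n$ we have $a\circ b\in N-N^{n}$, and the $n$-almost primary hypothesis yields $a\in N$ or $b^{k}\in N$ for some $k$. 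This reproduces the analogous converse argument given for $\phi$-prime hyperideals.

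The main obstacle, such as it is, lies in the converse of (2)(ii): one must be careful that ``$a\circ b\notin\bigcap_{n\geq2}N^{n}$'' delivers a \emph{single} index $n$ with $a\circ b\notin N^{n}$ (so that the $n$-almost primary hypothesis can be applied for that particular $n$), rather than needing the property simultaneously for all $n$. This is precisely why the hypothesis is stated as ``$n$-almost primary for all $n\geq2$'': it guarantees the needed conclusion no matter which index witnesses the failure of membership in the intersection. Everything else is a direct transcription of the $\phi$-prime proposition, with the sole modification that the output alternative $b\in N$ is systematically weakened to $b^{k}\in N$ for some $k\in\mathbb{N}$, which propagates harmlessly through each step.
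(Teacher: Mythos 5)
Your proposal is correct and follows essentially the same route as the paper: part (1) via the one-line containment $\sigma_{1}(N)\subseteq\sigma_{2}(N)$, part (2)(i) by chaining part (1) along the ordering of the $\phi_{i}$'s with the dictionary of Example \ref{ex1primary}, and part (2)(ii) by the monotonicity $\phi_{w}\leq\phi_{n}$ in one direction and extracting a witnessing index from $a\circ b\notin\bigcap_{n=1}^{\infty}N^{n}$ in the other. If anything, your explicit remark that the converse needs only a \emph{single} failing index $n\geq2$ (forced since $a\circ b\in N=N^{1}$) is phrased more carefully than the paper's own proof, which opens its forward direction with ``for all $n\geq2$'' where a fixed $n$ is what the argument actually uses.
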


\begin{proof}
(1) We suppose $N$ is a $\sigma_{1}$-primary hyperideal of $\Re$. Take $a\circ
b\in N-\sigma_{2}(N)$ for $a,b\in\Re.$ It means $a\circ b\in N-\sigma_{1}(N)$.
Because of $N$ is $\sigma_{1}$-primary hyperideal of $\Re$, $a\in N$ or
$b^{k}\in N$, for some $k\in%
\mathbb{N}
$. Therefore $N$ is $\sigma_{2}$-primary hyperideal.

(2) (i) We can obtain it by the ordering of the $\phi_{i}^{\prime}$s given in
\ref{exam2}. (ii) Assume that $a\circ b\in N-N^{n}$ for all $n\geq2$. Then
$a\circ b\in N-\cap_{n=2}^{\infty}N^{n}$ and we find $a\circ b\in N-\cap
_{n=1}^{\infty}N^{n}.$ Since $N$ is $w$-primary hyperideal of $\Re,$ then we have
$a\in N$ or $b^{k}\in N$, for some $k\in%
\mathbb{N}
$ . Conversely, if $a\circ b\in N-\cap_{n=1}^{\infty}N^{n},$ then $a\circ b\in
N-$ $N^{n}$ for some $n\geq1.$ Actually $a\circ b\in N-$ $N^{n}$ for some
$n\geq2$. Therefore $a\in N$ or $b^{k}\in N$ for some $k\in%
\mathbb{N}
,$ since $N$ is $n$-almost primary for all $n\geq2.$
\end{proof}

The next theorem shows us how to determine $\phi$-primary hyperideal to be
primary. We call it as a characterization.

\begin{theorem}
\label{thm4.3} Let $\phi:L(\Re)\rightarrow L(\Re)\cup\{\emptyset\}$ be a
function and $T~$be a proper hyperideal of $\Re$, such that $T~$ is a $\phi$-primary
hyperideal of $\Re.$ If $T$ is not primary, then $T^{2}\subseteq$ $\phi(T).$
Hence at the same time it means if $T^{2}\nsubseteq$ $\phi(T)$, then $T$ is
primary hyperideal of $\Re.$
\end{theorem}

\begin{proof}
Assume that$~T^{2}\nsubseteq$ $\phi(T).~$We need to see that $T$ is primary.
Take some $x\circ y\in T$ for $x,y\in\Re.$ If $x\circ y\notin\phi(T)$, then since
$T$ is $\phi$-primary, $x\in T$ or $y^{k}\in T$ for some $k\in%
\mathbb{N}
.$ If $x\circ y\in\phi(T),$ assuming that $x\circ T\nsubseteq$ $\phi(T),$ we get
$x\circ p_{0}\notin\phi(T)$, where $p_{0}\in T.$ We have $x\circ(y\oplus
p_{0})\subseteq T-\phi(T)$. Thus $x\in T$ or $(y\oplus p_{0})^{k}\subseteq T$
for some $k\in%
\mathbb{N}
$. It follows $x\in T$ or $y^{k}\in T.$ Now we can assume that $x\circ
T\subseteq$ $\phi(T).$ (In the same way we can assume that $y\circ T\subseteq$
$\phi(T)$). Because of $T^{2}\nsubseteq$ $\phi(T),$ there exist $p_{1}%
,q_{1}\in T$ with $p_{1}\circ q_{1}\notin\phi(T).$ Then $(x\oplus p_{1}%
)\circ(y\oplus q_{1})\subseteq T-\phi(T).$ As $T$ is $\phi$-primary, so
$(x\oplus p_{1})\subseteq T$ or $(y\oplus q_{1})^{m}\subseteq T$ for some
$m\in%
\mathbb{N}
.$ Consequently $T$ is primary hyperideal of $\Re$.
\end{proof}

\begin{corollary}
If $T~$is a $\phi$-primary hyperideal of $\Re$, where $\phi\leq\phi_{3},$ then
$T$ is $w$-primary hyperideal.
\end{corollary}

\begin{proof}
That proof is similar to Corollary \ref{corol1}.
\end{proof}

In the following, some characterizations of $\phi$-primary hyperideals are provided.

\begin{theorem}
\label{Thmchar2} Let $N$ be a proper hyperideal of the commutative
Krasner hyperring $\Re$ and let $\phi:L(\Re)\rightarrow L(\Re)\cup
\{\emptyset\}$ be a function. The following statements hold:

$(i)$ $N$ is $\phi$-primary hyperideal of $\Re$.

$(ii)$ For $a\in\Re-\sqrt{N};$ $(N:a)=N\cup(\phi(N):a)$.

$(iii)$ For $a\in\Re-\sqrt{N};$ $(N:a)=N$ or $(N:a)=(\phi(N):a)$.

$(iv)$ For each hyperideals $K$ and $L$ of $\Re$, if $K\circ L\subseteq N$ and
$K\circ L\nsubseteq\phi(N),$ then $K\subseteq N$ or $L\subseteq\sqrt{N}.$
\end{theorem}

\begin{proof}
$(i)\Longrightarrow(ii)$ Let $N$ is $\phi$-primary hyperideal of $\Re$.~It is
obvious that $N\cup(\phi(N):a)\subseteq(N:a).$ To prove the other side; for
every $b\in(N:a),$ we have $a\circ b\in N.$ If$\ a\circ b\in N-\phi(N),$
then$\ b\in N$ since $N$ is $\phi$-primary and $a\in\Re-\sqrt{N}.$ If $a\circ
b\in\phi(N)$ then $b\in(\phi(N):a).$ So $(N:a)\subseteq N\cup(\phi(N):a)$.
Hence $(N:a)=N\cup(\phi(N):a)$.

$(ii)\Longrightarrow(iii)$ It is obvious because of $(N:a)$ is a hyperideal of
$\Re$.

$(iii)\Longrightarrow(iv)$ Let $K$ and $L$ be hyperideals of $\Re$, with
$K\circ L\subseteq N-\phi(N).$ Assume that $L\nsubseteq\sqrt{N}.$ Then there
exists an element $a\in L-\sqrt{N}~$by $(iii)$ we have $(N:a)=N$ or
$(N:a)=(\phi(N):a).$ If $K\circ a\nsubseteq\phi(N)$, then $K\nsubseteq
(\phi(N):a).$ Since $(iii)$ holds, then $K\subseteq(N:a)=N.$ We are done. Assume that
$K\circ a\subseteq\phi(N).$ Since $K\circ L\nsubseteq\phi(N),$ then we can choose an
element $x\in L$ such that $K\circ x\subseteq N-\phi(N).$ If $x\notin\sqrt
{N},$ then by $(iii)$, $(N:x)=N$ or $(N:x)=(\phi(N):x).$ Since $K\subseteq(N:x)$, but
$K\nsubseteq(\phi(N):x)$, then we conclude that $K\subseteq(N:x)=N.$ So assume that
$x\in\sqrt{N}.$ Then we have $a\oplus x\subseteq L-\sqrt{N},$ and also note
that $K\circ(a\oplus x)\subseteq N-\phi(N)$, since $K\circ a\subseteq\phi(N)$
and $K\circ x\nsubseteq\phi(N).$ Then $K\subseteq(N:a\oplus x)=N$ which
completes the proof.

$(iv)\Longrightarrow(i)$ Let $a\circ b\in N-\phi(N).$ Then $(a)\circ
(b)\subseteq N,$ but $(a)\circ(b)\nsubseteq\phi(N).$ So $(a)\subseteq N$ or
$(b)\subseteq\sqrt{N}.$ Therefore $a\in N$ or $b^{m}\in N$ for some $m\in%
\mathbb{N}
.$
\end{proof}

In Theorem \ref{thm4.3}, we prove that if $T$ is a $\phi$-primary hyperideal
that is not primary, then $T^{2}\subseteq$ $\phi(T).$

\begin{proposition}
Let $\phi:L(\Re)\rightarrow L(\Re)\cup\{\emptyset\}$ be a function
and $T~$ be a $\phi$-primary hyperideal of $\Re$.

(i) If $T$ is an hyperideal of $\Re$ with $M\subseteq T,$ then $T/M$ is
$\phi_{M}$-primary hyperideal of $\Re/M.$

(ii) Let $S$ be a multiplicatively closed subset of $\Re$ with $T\cap
S=\emptyset~$and $\phi(T)_{S}\subseteq\phi_{S}(T_{S}).$ Then $T_{S}$ is an
$\phi_{S}$-primary hyperideal of $\Re_{S}.$
\end{proposition}

\begin{proof}
(i) Let $x,y\in\Re.$ We assume that $(x\oplus M)\circ(y\oplus M)\subseteq
T/M-\phi_{M}(T/M),$ it means $x\circ y\oplus M\subseteq T/M-(\phi(T\oplus
M))/M.$ We find $x\circ y\in T-\phi(T\oplus M).$ Then $x\circ y\in T-\phi(T)$
and so $x\in T$ or $y^{k}\in T$ for some $k\in%
\mathbb{N}
.$ Hence $x\oplus M\subseteq T/M$ or $(y\oplus M)^{k}\subseteq T/M.$ This
gives us $T/M$ is $\phi_{M}$-primary hyperideal of $\Re/M$.

(ii) Let $\frac{a}{s}\circ\frac{b}{t}\in T_{S}-\phi_{S}(T_{S}),$ for some
$a,b\in\Re; s,t\in S.$ Then we have $p\circ a\circ b\in T$ for some $p\in S$
but $q\circ a\circ b\notin\phi(T)\cap\Re$ for every $q\in S.$ Now if $q\circ
a\circ b\in\phi(T),$ then $\frac{a}{s}\circ\frac{b}{t}\in\phi(T)_{S}%
\subseteq\phi_{S}(T_{S})$ gives us a contradiction. So $p\circ a\circ b\in
T-\phi(T)$ and since $T$ is $\phi$-primary, then we get either
$p\circ a\in T$ or $b^{k}\in T$ for some $k\in%
\mathbb{N}
.$ Therefore $\frac{a}{s}\in T_{S}$ or $\frac{b^{k}}{t^{k}}\in T_{S}$ because
of $\sqrt{T}\cap S=\emptyset~($it is same with $T\cap S=\emptyset).$ Thus
$T_{S}$ is $\phi_{S}$-primary hyperideal of $\Re_{S}$.
\end{proof}

\begin{theorem}
(i) Let $X, Y$ be commutative Krasner hyperrings and $N$ be a weakly primary
hyperideal of $X$. Then $M=N\otimes Y~$ is a $\phi$-primary hyperideal of
$\Re=X\otimes Y$, for all $\phi$ with $\phi_{w}\leq\phi\leq\phi_{1}.$

(ii) Let $\Re$ be a commutative Krasner hyperring and $M$ be a finitely
generated proper hyperideal of $\Re$, such that $M$ is a $\phi$-primary
hyperideal with $\phi\leq\phi_{3}.$ Then $M$ is either weakly primary or
$M^{2}\neq0$ is idempotent and $\Re$ decomposes as $X\otimes Y$, where
$Y=M^{2}$ and $M=N\otimes Y$, where $N$ is weakly primary. As a result $M$ is
$\phi$-primary for each $\phi$ with $\phi_{w}\leq\phi\leq\phi_{1}.$
\end{theorem}

\begin{proof}
(i) Let $N$ be a weakly primary hyperideal of $X$. $M$ is weakly primary if
and only if $M$ is primary. Nevertheless $M~$is a $\phi$-primary hyperideal
for each $\phi$ with $\phi_{w}\leq\phi.$ If $N$ is primary, then $M$ is
primary and hence $\phi$-primary for all $\phi.$ Assume that $N$ is not
primary. Then $N^{2}=0.$ Therefore $M^{2}=0\otimes Y$ and hereby $\phi
_{w}(M)=0\otimes Y$. Hence we can write $M-\phi_{w}(M)$ in this form
$M-\phi_{w}(M)=N\otimes Y-0\otimes Y=(N-\{0\})\otimes Y.$ Let take
$(a_{1},a_{2})\circ(b_{1},b_{2})=(a_{1}\circ b_{1},a_{2}\circ b_{2})\in$
$M-\phi_{w}(M)$. It means $a_{1}\circ b_{1}\in N-\{0\},$ so $a_{1}\in N$ or
$b_{1}^{s}\in N$ for some $s\in%
\mathbb{N}
$. Then $(a_{1},a_{2})\in$ $N\otimes Y$ or $(b_{1},b_{2})^{s}\in$ $N\otimes
Y$. Hence $M=N\otimes Y~$is $\phi_{w}$-primary, therefore $M$ is $\phi
$-primary hyperideal.

(ii) If $M$ is primary hyperideal, then $M$ is weakly primary hyperideal of
$\Re$. Suppose that $M$ is not primary. From Theorem \ref{thm4.3},
$M^{2}\subseteq$ $\phi(M)$ and hence $M^{2}\subseteq$ $\phi(M)\subseteq
\phi_{3}(M)=M^{3}.$ So $M^{2}=M^{3},$ which means $M^{2}$ is idempotent. Since
$M^{2}$ is finitely generated, then $M^{2}=(m)$ for some idempotent $m\in\Re.$
Assume that $M^{2}=0$. Then $\phi(M)\subseteq$ $M^{3}=0$. Hereby $\phi(M)=0.$
Consequently, $M$ is weakly primary hyperideal of $\Re$. Now let us suppose $M^{2}%
\neq0.$ Take $Y=M^{2}=\Re\circ m$ and $X=\Re\circ(1\ominus m),$ so $\Re$
decomposes as $X\otimes Y$, where $Y=M^{2}.$ Let $N=M\circ(1\ominus m),$ so
$M=N\otimes Y$, where $N^{2}=(M\circ(1\ominus m))^{2}=M^{2}\circ(1\ominus
m)^{2}=(m)\circ(1\ominus m)=0.$ To show $N$ is weakly primary hyperideal, let
$a\circ b\in N^{2}-\{0\}$. Thus $(a,1)\circ(b,1)=(a\circ b,1)\in$ $N\otimes
Y-(N\otimes Y)^{2}=N\otimes Y-0\otimes Y\subseteq M-\phi(M)$, since $\phi
\leq\phi_{3}$ implies $\phi(M)\subseteq$ $M^{3}=(N\otimes Y)^{3}=0\otimes Y$.
We obtain that $(a,1)\in$ $M$ or $(b,1)^{t}\in$ $M$, for some $t\in%
\mathbb{N}
.$ Therefore $a\in$ $N$ or $b^{t}\in$ $N$. As a consequence, $N$ is weakly
primary hyperideal of $\Re$.
\end{proof}

\begin{proposition}
Let $\Re_{1}$ and $\Re_{2}$ be commutative Krasner hyperrings and let
$\varphi_{i}:L(\Re_{i})\rightarrow L(\Re_{i})\cup\{\emptyset\}$ be a function,
for $i=1,2.$ Take $\phi=\varphi_{1}\times\varphi_{2}$ and $\Re=\Re_{1}%
\times\Re_{2}.$ Then $N$ is $\phi$-primary hyperideal of $\Re$ if and only if
$N$ is one of the following types:

(i) $N=N_{1}\times N_{2},\ $where $N_{i}\ $is a proper hyperideal of $\Re
_{i}\ $with $\varphi_{i}(N_{i})=N_{i}.$

(ii) $N=N_{1}\times\Re_{2},\ $where $N_{1}\ $is $\varphi_{1}$-primary
hyperideal of $\Re_{1}$ that should be primary if $\varphi_{2}(\Re_{2})\neq
\Re_{2}.$

(iii)\ $N=\Re_{1}\times N_{2},\ $where $N_{2}\ $is $\varphi_{2}$-primary
hyperideal of $\Re_{2}$ that should be primary if $\varphi_{1}(\Re_{1})\neq
\Re_{1}.$
\end{proposition}

\begin{proof}
($\Longrightarrow$) (i) Obviously $N$ is $\phi$-primary hyperideal, since
$N_{1}\times N_{2}-\phi(N_{1}\times N_{2})=\emptyset.$

(ii) Let $N_{1}\ $ be $\varphi_{1}$-primary hyperideal of $\Re_{1}$ and
$\varphi_{2}(\Re_{2})\neq\Re_{2}$. Assume that $(a_{1}\circ b_{1},a_{2}\circ
b_{2})=(a_{1},a_{2})\circ(b_{1},b_{2})\in N_{1}\times\Re_{2}-\varphi_{1}%
(N_{1})\times\varphi_{2}(\Re_{2})=(N_{1}-\varphi_{1}(N_{1}))\times(\Re
_{2}-\varphi_{2}(\Re_{2}))$, for $(a_{1},a_{2}),(b_{1},b_{2})\in\Re=\Re
_{1}\times\Re_{2}.$ We have $a_{1}\circ b_{1}\in N_{1}-\varphi_{1}(N_{1}),$ then
either $a_{1}\in N_{1}$ or $b_{1}^{k}\in N_{1}$, for some $k\in%
\mathbb{N}
$, since $N_{1}\ $is $\varphi_{1}$-primary hyperideal of $\Re_{1}$. So
$(a_{1},a_{2})\in N_{1}\times\Re_{2}$ or $(b_{1}^{k},b_{2}^{k})=(b_{1}%
,b_{2})^{k}\in N_{1}\times\Re_{2}.$ Therefore $N_{1}\times\Re_{2}$ is $\phi
$-primary hyperideal of $\Re$.

(iii) The proof is similar to (ii).

($\Longleftarrow$) Suppose that $N$ is a $\phi$-primary hyperideal of $\Re$,
where $\varphi_{i}(N_{i})\neq N_{i}.$ Let $a\circ b\in N_{1}-\varphi_{1}%
(N_{1})$, for some $a,b\in\Re_{1}.$ Thus $(a,0)\circ(b,0)=(a\circ b,0)\in
N-\phi(N).$ Since $N$ is a $\phi$-primary hyperideal of $\Re,$ then $(a,0)\in
N$ or $(b,0)^{k}\in N$ for some $k\in%
\mathbb{N}
.$ So $a\in N_{1}$ or $b^{k}\in N_{1}.$ Therefore $N_{1}\ $is $\varphi_{1}%
$-primary hyperideal of $\Re_{1}.$ Similarly we can find $N_{2}\ $is
$\varphi_{2}$-primary hyperideal of $\Re_{2}.$ We have to show now that
$N_{1}=\Re_{1}$ or $N_{2}=\Re_{2}.$ Suppose that $N_{2}\not =\Re_{2}.$ Let
take $b_{1}\in N_{1}-\varphi_{1}(N_{1}),$ $b_{2}\in\Re_{2}-N_{2}.$ Then note
that $(1,0)\circ(b_{1},b_{2})=(b_{1},0)\in N-\phi(N).$ Since $N_{2}\not =%
\Re_{2},$ $(1,0)^{t}=(1,0)\in N$ ($\exists t\in%
\mathbb{N}
$), then we find $1\in N_{1}$ and so we get $N_{1}=\Re_{1}.$ Similarly one can
easily find $N_{2}=\Re_{2}$, if $N_{1}\not =\Re_{1}$. Without loss of
generality $N_{1}\not =\Re_{1}$. Now let we show that $N_{1}$ is primary
hyperideal with $\varphi_{2}(\Re_{2})\neq\Re_{2}$. For $m^{\prime}\in\Re
_{2}-\varphi_{2}(\Re_{2})$, let $x\circ m\in N_{1}$, for some $x,m\in\Re_{1}$.
Then we conclude that $(x,1)\circ(m,m^{\prime})=(x\circ m,m^{\prime})\in
N-\phi(N)$. Since $N$ is a $\phi$-primary hyperideal of $\Re$, then we find
$(x,1)^{s}\in N$ for some $s\in%
\mathbb{N}
$ or $(m,m^{\prime})\in N$ which implies that $x^{s}\in N_{1}$ or $m\in N_{1}%
$. Therefore $N_{1}$ is a primary hyperideal of $\Re_{1}.$ If $\varphi_{1}%
(\Re_{1})\neq\Re_{1}$ and $N_{1}=\Re_{1}$, then similarly one can prove that
$N_{2}$ is a primary hyperideal of $\Re_{2}$.
\end{proof}

\section{$\phi$-$\delta$-primary {Hyperideals in Krasner Hyperrings }}

Let $N$ be a proper hyperideal of hyperring $\Re$. Denote the set of all hyperideals of $\Re,$by $L(\Re)$ and denote the
set of all proper hyperideals of $\Re,$ by $L^{\ast}(\Re).$ The function $\phi
:L(\Re)\rightarrow L(\Re)\cup\{\emptyset\}$ is said to be reduction
function if $\phi(N)\subseteq N\ $and $N\subseteq M\ $implies $\phi
(N)\subseteq\phi(M)$, for each $N, M\in L(\Re)$ and $\delta$ be an expansion
function such that $\delta:L(\Re)\rightarrow L(\Re).$ Now, we give some
examples related to reduction and expansion functions.

\begin{example}
Let $\Re$ be a commutative Krasner hyperring with a nonzero identity. Let us consider the following functions $\delta$ on $L(\Re),$ for any $N\in
L(\Re),$

(i)$\ \delta_{0}(N)=N,\ $i.e., $\delta$ is the identity function.

(ii) $\delta_{1}(N)=\sqrt{N},\ $i.e., $\delta$ is the radical operation.

(iii)\ $\delta_{res}(N)=(N:M)\ $for a fixed $M\in L(\Re).$

$\ $(iv)$\ \delta_{ann}(N)=ann(ann(N)).\ $

(vi)\ $\delta_{M}(N)=N\oplus M\ $for a fixed $M\in L(\Re).$
\end{example}

All the above functions are examples of expansion on $L(\Re).$

\begin{example}
$~$Let $\Re\ $be a commutative Krasner hyperring with a nonzero identity.
Consider the following functions $\phi:L(\Re)\rightarrow L(\Re)\cup
\{\emptyset\}$ defined as follows: for any $N\in L(\Re):$

(i)\ $\phi_{\emptyset}(N)=\emptyset.$

(ii)\ $\phi_{0}(N)=0.$

(iii)\ $\phi_{1}(N)=N.$

(iv)\ $\phi_{2}(N)=N^{2}.$

(v)\ $\phi_{k}(N)=N^{k}.$

(vi)\ $\phi_{w}(N)=%
{\displaystyle\bigcap\limits_{i=1}^{\infty}}
N^{i}.$
\end{example}

All the above functions are reduction on $L(\Re).$ Remember that $\phi_{\oslash
}\leq\phi_{0}\leq\phi_{w}\leq...\leq\phi_{n+1}\leq\phi_{n}\leq\phi_{n-1}%
\leq...\leq\phi_{2}\leq\phi_{1}.$

\begin{definition}
Let $\delta$ be a hyperideal expansion, $\phi$ be a hyperideal reduction and
$N$ be a proper hyperideal of $\Re.$ $N$ is said to be a $\phi$-$\delta
$-primary hyperideal if $a\circ b\in N-$ $\phi(N),$ then either $a\in N$ or
$b\in\delta(N)$ for each $a,b\in\Re.$
\end{definition}

\begin{remark}
\cite{18} If $\delta_{1},\delta_{2},...,\delta_{n}$ are hyperideal expansions,
then $\delta=\cap_{i=1}^{n}\delta_{i}$ is also an hyperideal expansion.
\end{remark}

\begin{example}
\label{ex1} Let $\Re$ be a hyperring and $N$ be a proper hyperideal of
$\Re.$

(i) $N$ is prime hyperideal if and only if it is $\phi_{\emptyset}$%
-$\delta_{0}$-primary hyperideal \cite{18}.

(ii)\ $N\ $is primary hyperideal if and only if it is $\phi_{\emptyset}%
$-$\delta_{1}$-primary hyperideal \cite{18}.

(iii) $N$ is $\phi$-prime hyperideal if and only if it is $\phi$%
-$\delta_{0}$-primary hyperideal.

(iv) $N$ is $\phi$-primary hyperideal if and only if it is $\phi$-$\delta
_{1}$-primary.

(v)\ $N$ is $\delta$-primary hyperideal if and only if it is $\phi
_{\emptyset}$-$\delta$-primary hyperideal.

(vi) $N\ $is weakly prime hyperideal if and only if it is $\phi_{0}$%
-$\delta_{0}$-primary hyperideal.

(vii)\ $N\ $is almost prime hyperideal if and only if it is $\phi_{2}$%
-$\delta_{0}$-primary hyperideal.
\end{example}

Suppose that $\phi,\sigma$ are reductions on $L(\Re)$. Then we write
$\phi\leq\sigma$ if $\phi(N)\subseteq\sigma(N)$, for all $N\in L(\Re
).$ Similarly, for any two expansions $\delta,\gamma$ on $L(\Re); \delta
\leq\gamma$ if $\delta(N)\subseteq\gamma(N)$, for each $N\in L(\Re).$

\begin{definition}
Let $\Re$ be a hyperring and $N$ be a proper hyperideal of $\Re.$

(i)\ If $N$ is $\phi_{0}$-$\delta$-primary hyperideal, then $N$ is said to
be a weakly $\delta$-primary hyperideal of $\Re.$

(ii)\ If $N$ is $\phi_{2}$-$\delta$-primary hyperideal, then $N$ is said to
be an almost $\delta$-primary hyperideal of $\Re$.

(iii)\ If $N$ is $\phi_{n}$-$\delta$-primary hyperideal, then $N$ is said
to be an $n$-almost $\delta$-primary hyperideal of $\Re$.

(iv)\ If $N$ is $\phi_{w}$-$\delta$-primary hyperideal, then $N$ is said to
be a $w$-$\delta$-primary hyperideal of $\Re$.
\end{definition}

\begin{proposition}
\label{p1} Let $\Re$ be a hyperring, $N$ be a proper hyperideal of $\Re
$, $\phi,\varphi$ be reductions on $L(\Re)$ and $\delta,\gamma$ be
expansions on $L(\Re)$. The following statements hold:

(i) If $\phi\leq\sigma,$ then every $\phi$-$\delta$-primary hyperideal is a
$\sigma$-$\delta$-primary hyperideal.

(ii)\ If $\delta\leq\gamma,$ then every $\phi$-$\delta$-primary hyperideal
is a $\phi$-$\gamma$-primary hyperideal.

(iii) Every $\phi$-prime hyperideal is a $\phi$-$\delta$-primary hyperideal.

(iv) Every $\delta$-primary hyperideal is a $\phi$-$\delta$-primary hyperideal.

(v)\ Suppose that $N$ is proper hyperideal of $\Re$. $N$ is weakly $\delta
$-primary $\Rightarrow$ $N$ is $w$-$\delta$-primary hyperideal $\Rightarrow
N$ is $n$-almost $\delta$-primary, for each $n\geq2 \Rightarrow N$ is
almost $\delta$-primary.
\end{proposition}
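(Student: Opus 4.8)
The plan is to prove parts (i) and (ii) directly from the definition of a $\phi$-$\delta$-primary hyperideal, and then to deduce (iii), (iv) and (v) from these two by identifying the relevant extremal choices of reduction and expansion. Recall that in a Krasner hyperring the multiplication $\circ$ is an ordinary (single-valued) semigroup operation, so every membership statement $a\circ b\in N-\phi(N)$ concerns a single element and no set-subtleties arise.

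For (i), I would assume $N$ is $\phi$-$\delta$-primary and take an arbitrary pair $a,b\in\Re$ with $a\circ b\in N-\sigma(N)$. The key observation is that $\phi\leq\sigma$, i.e.\ $\phi(N)\subseteq\sigma(N)$, forces $N-\sigma(N)\subseteq N-\phi(N)$; hence $a\circ b\in N-\phi(N)$ as well. Applying the $\phi$-$\delta$-primary hypothesis to this pair yields $a\in N$ or $b\in\delta(N)$, which is exactly what is required for $N$ to be $\sigma$-$\delta$-primary. For (ii), I would instead take $a\circ b\in N-\phi(N)$ and use that $N$ is $\phi$-$\delta$-primary to get $a\in N$ or $b\in\delta(N)$; since $\delta\leq\gamma$ gives $\delta(N)\subseteq\gamma(N)$, the second alternative upgrades to $b\in\gamma(N)$, proving that $N$ is $\phi$-$\gamma$-primary.

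Parts (iii) and (iv) then reduce to (ii) and (i) respectively once the appropriate functions are recognized. By Example \ref{ex1}(iii) a $\phi$-prime hyperideal is precisely a $\phi$-$\delta_0$-primary hyperideal, where $\delta_0$ is the identity expansion; since $\delta_0(N)=N\subseteq\delta(N)$ for every expansion $\delta$, we have $\delta_0\leq\delta$, and (ii) delivers that $N$ is $\phi$-$\delta$-primary. (Equivalently, one may argue directly: if $a\circ b\in N-\phi(N)$ and $N$ is $\phi$-prime, then $a\in N$ or $b\in N\subseteq\delta(N)$.) Dually, by Example \ref{ex1}(v) a $\delta$-primary hyperideal is a $\phi_{\emptyset}$-$\delta$-primary hyperideal, and $\phi_{\emptyset}(N)=\emptyset\subseteq\phi(N)$ gives $\phi_{\emptyset}\leq\phi$, so (i) yields that $N$ is $\phi$-$\delta$-primary.

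Finally, for (v) I would read off each implication from (i) together with the ordering $\phi_{\emptyset}\leq\phi_0\leq\phi_w\leq\cdots\leq\phi_{n+1}\leq\phi_n\leq\cdots\leq\phi_2\leq\phi_1$ recalled after the definition, using the identifications that a weakly, $w$-, $n$-almost, and almost $\delta$-primary hyperideal is respectively a $\phi_0$-, $\phi_w$-, $\phi_n$-, and $\phi_2$-$\delta$-primary hyperideal. The inclusion $\phi_0\leq\phi_w$ gives the first implication; $\phi_w\leq\phi_n$ gives the second for each fixed $n\geq 2$; and the last implication is immediate since the almost $\delta$-primary condition is simply the $n=2$ instance of the preceding family. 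I expect no genuine obstacle anywhere; the only point requiring care is the one already isolated in (i), namely keeping the containment $\phi(N)\subseteq\sigma(N)\Rightarrow N-\sigma(N)\subseteq N-\phi(N)$ pointing in the correct direction, so that a smaller reduction yields the stronger primary-type condition.
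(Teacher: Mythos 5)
Your proposal is correct and follows essentially the same route as the paper: the paper dismisses (i) and (ii) as straightforward (you supply the details, which are exactly the intended ones), and it derives (iii), (iv), and (v) from (i)--(ii) via the same identifications $\delta_0\leq\delta$, $\phi_{\emptyset}\leq\phi$, and the chain $\phi_0\leq\phi_w\leq\phi_n\leq\phi_2$. In fact your citation of Example \ref{ex1}(v) for part (iv) is more accurate than the paper's own proof, which mistakenly cites Example \ref{ex1}(iii) there.
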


\begin{proof}
$(i) (ii):$ Straightforward.

$(iii):$ It follows from (ii) and Example \ref{ex1} (iii), since $\delta_{0}%
\leq\delta.$

$(iv):$ It follows from (i) and Example \ref{ex1} (iii), since $\phi_{\emptyset
}\leq\phi.$

$(v):$ It follows from (i) and the fact that $\phi_{0}\leq\phi_{w}\leq\phi
_{n}\leq\phi_{2}.$
\end{proof}

\begin{proposition}
Let $\phi$ be a hyperideal reduction, $\delta$ be a hyperideal expansion and
$\{M_{i}:i\in\Delta\}$ be a directed family of $\phi$-$\delta$-primary
hyperideals of $\Re.$ Then $M=%
{\displaystyle\bigcup\limits_{i\in\Delta}}
M_{i} $is a $\phi$-$\delta$-primary hyperideal.
\end{proposition}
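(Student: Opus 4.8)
The plan is to first verify that $M=\bigcup_{i\in\Delta}M_i$ is a proper hyperideal, and then to check the defining implication of a $\phi$-$\delta$-primary hyperideal by pushing the hypothesis down into a single member $M_j$ of the family and pulling the conclusion back up. Directedness is exactly what makes the union well behaved: given $x,y\in M$, say $x\in M_i$ and $y\in M_j$, directedness yields $k\in\Delta$ with $M_i\cup M_j\subseteq M_k$, so $x,y\in M_k$; since $M_k$ is a hyperideal we get $x\oplus y\subseteq M_k\subseteq M$ and $r\circ x\in M_k\subseteq M$ for all $r\in\Re$, and $0\in M_k\subseteq M$, which shows $M$ is a hyperideal. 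It is proper because each $M_i$ is proper, so $1\notin M_i$ for every $i$ and hence $1\notin M$.

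For the main implication, suppose $a\circ b\in M-\phi(M)$ for some $a,b\in\Re$. Since $a\circ b$ is a single element of $\Re$ (the multiplication $\circ$ is an ordinary semigroup operation, not a hyperoperation) and $a\circ b\in M=\bigcup_{i\in\Delta}M_i$, there is an index $j\in\Delta$ with $a\circ b\in M_j$. Because $M_j\subseteq M$ and $\phi$ is a reduction function, monotonicity gives $\phi(M_j)\subseteq\phi(M)$; as $a\circ b\notin\phi(M)$, this forces $a\circ b\notin\phi(M_j)$. Hence $a\circ b\in M_j-\phi(M_j)$, and since $M_j$ is $\phi$-$\delta$-primary we conclude $a\in M_j$ or $b\in\delta(M_j)$. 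In the first case $a\in M_j\subseteq M$. In the second case, $M_j\subseteq M$ together with the monotonicity of the expansion $\delta$ yields $\delta(M_j)\subseteq\delta(M)$, whence $b\in\delta(M)$. Either way $a\in M$ or $b\in\delta(M)$, so $M$ is $\phi$-$\delta$-primary.

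The substance of the argument is precisely the two monotonicity axioms built into the definitions: the reduction property $M_j\subseteq M\Rightarrow\phi(M_j)\subseteq\phi(M)$ is what transports the hypothesis $a\circ b\notin\phi(M)$ down to $a\circ b\notin\phi(M_j)$, and the expansion property $M_j\subseteq M\Rightarrow\delta(M_j)\subseteq\delta(M)$ is what transports the conclusion $b\in\delta(M_j)$ back up to $b\in\delta(M)$. The localization of the product $a\circ b$ to a single $M_j$ needs nothing beyond $\circ$ being single-valued, so directedness is genuinely used only in establishing that $M$ is a hyperideal. I do not anticipate a serious obstacle here; the only point requiring care is not to invoke that an arbitrary union of hyperideals is a hyperideal, which fails without the directedness hypothesis.
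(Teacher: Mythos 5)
Your proof is correct and follows essentially the same route as the paper's: locate the single-valued product $a\circ b$ in one member $M_j$ of the family, transport $a\circ b\notin\phi(M)$ down to $a\circ b\notin\phi(M_j)$ via monotonicity of the reduction $\phi$, apply $\phi$-$\delta$-primariness of $M_j$, and transport $b\in\delta(M_j)$ up to $b\in\delta(M)$ via monotonicity of the expansion $\delta$. Yours is in fact the more careful rendering: the paper leaves the monotonicity step for $\phi$ implicit (its line ``$a\circ m\in M-\phi(M)$ for some $i\in\Delta$'' garbles the localization to $M_i$) and never checks that $M$ is a hyperideal at all, which, as you correctly observe, is the only point where directedness is genuinely used.
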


\begin{proof}
Let $\{M_{i}:i\in\Delta\}$ be a directed family of $\phi$-$\delta$-primary
hyperideals of $\Re.$ Assume that $a\circ m\in M-\phi(%
{\displaystyle\bigcup\limits_{i\in\Delta}}
M_{i}).$ This implies that $a\circ m\in M-\phi(M)$, for some $i\in\Delta.$ We
get either $a\in M$ or $m\in\delta(M),$ because of $M$ is a $\phi$-$\delta
$-primary. If $a\in M,$ then clearly we have $a\in%
{\displaystyle\bigcup\limits_{i\in\Delta}}
M_{i}. $If $m\in\delta(M)$, then we have $m\in\delta(%
{\displaystyle\bigcup\limits_{i\in\Delta}}
M_{i})$, since $M\subseteq%
{\displaystyle\bigcup\limits_{i\in\Delta}}
M_{i}.$ Hence $%
{\displaystyle\bigcup\limits_{i\in\Delta}}
M_{i}$ is a $\phi$-$\delta$-primary hyperideal.
\end{proof}

In the following, we give a characterization for $\phi$-$\delta$-primary hyperideals such
that $\phi$ is a hyperideal reduction and $\delta$ is a hyperideal expansion.

\begin{theorem}
\label{tm1} Let $\Re$ be a Krasner hyperring and $N$ be a proper hyperideal
of $\Re.$ Then the following statements hold:

(i)\ $N $ is a $\phi$-$\delta$-primary hyperideal;

(ii)\ For each $a\in\Re-\delta(N), (N:a)=N\cup(\phi(N):a);$

(iii)\ For each $a\in\Re-\delta(N), (N:a)=N$ or $(N:a)=(\phi(N):a);$

(iv)\ For each hyperideal $K,L $of $\Re, K\circ L\subseteq N$ and $K\circ
L\nsubseteq\phi(N)$ imply that $K\subseteq N$ or $L\subseteq\delta(N).$

(v)\ \label{thv} For each hyperideal $M $of $\Re$ such that $M\nsubseteq
\delta(N),$ then $(N:M)=N$ or $(N:M)=(\phi(N):M).$
\end{theorem}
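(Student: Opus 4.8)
The plan is to prove the chain of equivalences (i)$\Leftrightarrow$(ii)$\Leftrightarrow$(iii)$\Leftrightarrow$(iv) by following the template already established in Theorem \ref{Thmchar} and Theorem \ref{Thmchar2}, replacing the role of $N$ (in the prime case) or $\sqrt{N}$ (in the primary case) by the expansion $\delta(N)$. The key structural fact I will use repeatedly is that $\delta$ is a hyperideal expansion, so $N\subseteq\delta(N)$, monotonicity holds, and (by the idempotence noted in the introduction) $\delta(\delta(N))=\delta(N)$. I will then add the extra equivalence with (v) at the end.

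\textbf{The cyclic equivalences.} First I would prove (i)$\Rightarrow$(ii). Fixing $a\in\Re-\delta(N)$, the inclusion $N\cup(\phi(N):a)\subseteq(N:a)$ is immediate since $\phi(N)\subseteq N$. For the reverse inclusion, take $b\in(N:a)$, so $a\circ b\in N$; if $a\circ b\notin\phi(N)$ then $a\circ b\in N-\phi(N)$, and since $N$ is $\phi$-$\delta$-primary with $a\notin\delta(N)$, we must get $b\in N$; otherwise $a\circ b\in\phi(N)$ gives $b\in(\phi(N):a)$. The step (ii)$\Rightarrow$(iii) is the standard observation that $(N:a)$ is a hyperideal and hence cannot be a proper union of two sub-hyperideals unless it equals one of them. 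For (iii)$\Rightarrow$(iv), I would mirror the argument in Theorem \ref{Thmchar2}: given $K\circ L\subseteq N$ with $K\circ L\nsubseteq\phi(N)$, I assume $L\nsubseteq\delta(N)$ and produce, via an element $a\in L-\delta(N)$, the conclusion $K\subseteq N$; the delicate sub-case where $K\circ a\subseteq\phi(N)$ is handled by choosing $x\in L$ with $K\circ x\nsubseteq\phi(N)$ and then using the reversibility of the canonical hypergroup to form $a\oplus x\subseteq L-\delta(N)$ satisfying $K\circ(a\oplus x)\subseteq N-\phi(N)$. Finally (iv)$\Rightarrow$(i) is direct by passing to principal hyperideals: if $a\circ b\in N-\phi(N)$ then $(a)\circ(b)\subseteq N$ and $(a)\circ(b)\nsubseteq\phi(N)$, so $(a)\subseteq N$ or $(b)\subseteq\delta(N)$, giving $a\in N$ or $b\in\delta(N)$.

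\textbf{Incorporating (v).} To close the loop with statement (v), I would prove (iii)$\Leftrightarrow$(v), or alternatively (iv)$\Rightarrow$(v)$\Rightarrow$(iii). For (iv)$\Rightarrow$(v), given a hyperideal $M\nsubseteq\delta(N)$, set $K=M$ and $L=(N:M)$. Then $M\circ(N:M)\subseteq N$; if $M\circ(N:M)\nsubseteq\phi(N)$, statement (iv) forces $M\subseteq N$ (impossible without collapsing, since $N\subseteq\delta(N)$ and $M\nsubseteq\delta(N)$) or $(N:M)\subseteq\delta(N)$, so one argues the relevant case to obtain $(N:M)=N$; if instead $M\circ(N:M)\subseteq\phi(N)$, then $(N:M)\subseteq(\phi(N):M)$, and the reverse inclusion is automatic, yielding $(N:M)=(\phi(N):M)$. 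For the converse direction one specializes $M$ to the principal hyperideal $(a)$, noting $(N:a)=(N:(a))$, which recovers (iii) at once.

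\textbf{The main obstacle.} I expect the genuinely delicate step to be the sub-case analysis inside (iii)$\Rightarrow$(iv), specifically the manipulation of the hypersum $a\oplus x$. In the hyperring setting $a\oplus x$ is a \emph{set} rather than an element, so I must verify carefully that $(a\oplus x)\circ L\subseteq N-\phi(N)$ as a set inclusion and that every element of $a\oplus x$ lies outside $\delta(N)$; this relies on $a\notin\delta(N)$, $x\in\delta(N)$, and reversibility to control membership in the hypersum. The arguments in Theorem \ref{Thmchar} and Theorem \ref{Thmchar2} show this bookkeeping goes through with $N$ or $\sqrt{N}$, and since the only properties used there are $N\subseteq\delta(N)$ and monotonicity of the expansion—both valid for an arbitrary hyperideal expansion $\delta$—the same computation adapts verbatim. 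The equivalence with (v) is then routine once the principal-hyperideal specialization is observed.
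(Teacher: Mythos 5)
Your overall route---the cycle (i)$\Rightarrow$(ii)$\Rightarrow$(iii)$\Rightarrow$(iv) via colon hyperideals, the reversibility bookkeeping for the hypersum $a\oplus x$ inside (iii)$\Rightarrow$(iv), and closing the loop through (v) by specializing to the principal hyperideal $\Re\circ a$---is essentially the paper's own proof, and the ``main obstacle'' you identify is handled correctly: every element of $a\oplus x$ escapes $\delta(N)$ precisely because $\delta(N)$ is a hyperideal and the canonical hypergroup is reversible, which is the same point the paper leans on. (One harmless slip: the idempotence $\delta(\delta(N))=\delta(N)$ you cite is a property of Zhao's ring-theoretic expansions quoted in the introduction, not a consequence of the two axioms defining a hyperideal expansion here---$\delta_{res}$ need not satisfy it---but you never actually use it.)

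There is, however, a genuine gap in your step (iv)$\Rightarrow$(v). Statement (iv) is \emph{asymmetric}: it concludes $K\subseteq N$ or $L\subseteq\delta(N)$, so the assignment of $K$ and $L$ matters. You instantiate it with $K=M$ and $L=(N:M)$, which yields ``$M\subseteq N$ or $(N:M)\subseteq\delta(N)$''; after you correctly rule out $M\subseteq N$ (since $N\subseteq\delta(N)$ while $M\nsubseteq\delta(N)$), you are left with $(N:M)\subseteq\delta(N)$, and this does \emph{not} give $(N:M)=N$, because $\delta(N)$ may properly contain $N$ (take $\delta=\delta_{1}$: knowing $(N:M)\subseteq\sqrt{N}$ says nothing about $(N:M)\subseteq N$). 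The phrase ``so one argues the relevant case to obtain $(N:M)=N$'' papers over this dead end. The repair is simply to swap the roles, as the paper does: apply (iv) with $K=(N:M)$ and $L=M$. Since $(N:M)\circ M\subseteq N$, if this product is $\nsubseteq\phi(N)$ then (iv) gives $(N:M)\subseteq N$ or $M\subseteq\delta(N)$; the hypothesis kills the second disjunct, and $N\subseteq(N:M)$ always holds, so $(N:M)=N$, while the case $(N:M)\circ M\subseteq\phi(N)$ gives $(N:M)=(\phi(N):M)$ exactly as you wrote. With this correction your argument is complete; your closing $(v)\Rightarrow(iii)$ via $M=(a)$ is a valid alternative to the paper's direct $(v)\Rightarrow(i)$ with $M=\Re\circ a$.
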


\begin{proof}
$(i)\Rightarrow(ii): $ Suppose that $N$ is a $\phi$-$\delta$-primary
hyperideal and $a\in\Re-\delta(N).$ It is clear that $N\cup(\phi
(N):a))\subseteq(N:a). $Let $m\in(N:a). $Then we have $a\circ m\in N.$ If
$a\circ m\in\phi(N),$ then we obtain $m\in(\phi(N):a)\subseteq N\cup
(\phi(N):a)).$ Assume that $a\circ m\notin\phi(N).$ Since $a\circ m\in
N-\phi(N) $ and $a\notin\delta(N)$, then we get $m\in N\subseteq N\cup
(\phi(N):a)). $Hence, $(N:a)=N\cup(\phi(N):a)).$

$(ii)\Rightarrow(iii): $ It follows from the fact that a hyperideal is a union of
two hyperideals. Then it must be equal to one of them.

$(iii)\Rightarrow(iv):$ Let $K\circ L\subseteq N$, for some hyperideals $K$
and $L$ of $\Re.$ Assume that $L\nsubseteq\delta(N)$ and $K\nsubseteq N.$ Then there exists $m\in
L-\delta(N).$ We
have to show $K\circ L\subseteq\phi(N).$ By (iii), we have either $(N:m)=N$ or $(N:m)=(\phi
(N):m).$ If $K\circ m\subseteq N,$ then by (iii), we get $K\subseteq
(N:m)=N.$ For $a\in K-N$, it means $a\in(N:m)-N.$ Hence by part (iii),
$(N:m)=(\phi(N):m).$ Thus $K\subseteq(N:m)=(\phi(N):m)$ implies that $K\circ
m\subseteq\phi(N).$ On the other side, suppose that $m\in\delta(N).$ Then
$m\in L\cup\delta(N).$ Choose an element $m^{\prime}\in L-\delta(N), $so
$m\oplus m^{\prime}\subseteq L-\delta(N).$ Hence $K\circ m^{\prime}%
\subseteq\phi(N)$ and $K\circ(m\oplus m^{\prime})\subseteq\phi(N).$ Let $a\in
K$. Then $a\circ(m\oplus m^{\prime})\ominus a\circ m^{\prime}=a\circ
m\subseteq\phi(N).$ Thus $K\circ m\subseteq\phi(N).$ Therefore $K\circ
L\subseteq\phi(N).$

$(iv)\Rightarrow(v):$ Suppose that $M$ is a hyperideal of $\Re$ such that
$M\nsubseteq\delta(N).$ Also, note that $M\circ(N:M)\subseteq N.$ If
$M\circ(N:M)\subseteq\phi(N),$ then we have $(N:M)\subseteq(\phi
(N):M)\subseteq(N:M).$ Assume that $M\circ(N:M)\nsubseteq\phi(N).$ Then
by (iv), $(N:M)\subseteq N\subseteq(N:M).$

$(v)\Rightarrow(i):$ Suppose that $a\circ m\in N-\phi(N)$, with
$a\notin\delta(N),$ for $a,m\in\Re.$ Put $\Re\circ a=M$ and note that
$m\in(N:M).$ Then by (v), we get either $m\in(N:M)=N$ or $m\in
(N:M)=(\phi(N):M).$ The latter case is impossible, since $a\circ m\notin
\phi(N).$ Therefore, $N$ is a $\phi$-$\delta$-primary hyperideal.
\end{proof}

\begin{theorem}
(i)\ Let $T$ be a $\phi$-$\delta$-primary hyperideal of $\Re$ such that
$(\phi(T):a)=\phi(T:a)$, for each $a\in\Re.$ Then $(T:a)$ is a $\phi
$-$\delta$-primary hyperideal of $\Re.$

(ii)\ Suppose that $\delta\leq\delta_{1}$ are hyperideal expansions and
$T$ is a $\phi$-$\delta$-primary hyperideal of $\Re$ such that $\delta
_{1}(\phi(T))\subseteq\delta(T).$ Then $\delta(T)=\delta_{1}(T).$
\end{theorem}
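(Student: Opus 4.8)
The plan is to verify the defining implication for $(T:a)$ directly, after recording two preliminary remarks. First, we may assume $a\notin T$, for otherwise $(T:a)=\Re$ is improper and there is nothing to prove. Second, in a Krasner hyperring the multiplication $\circ$ is single valued, so $(T:a)=\{r\in\Re:r\circ a\in T\}$ is an honest colon hyperideal, and $T\subseteq(T:a)$ since $T$ absorbs products. Now suppose $x\circ y\in(T:a)-\phi((T:a))$. Unwinding definitions, $x\circ y\in(T:a)$ says $(x\circ y)\circ a\in T$, while $x\circ y\notin\phi((T:a))=(\phi(T):a)$ (this is exactly the hypothesis $(\phi(T):a)=\phi(T:a)$) says $(x\circ y)\circ a\notin\phi(T)$. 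The key move is to regroup the triple product by commutativity and associativity as $(x\circ a)\circ y\in T-\phi(T)$, and then apply the $\phi$-$\delta$-primary property of $T$ to this factorization: it yields $x\circ a\in T$, i.e.\ $x\in(T:a)$, or $y\in\delta(T)$. In the second case monotonicity of the expansion together with $T\subseteq(T:a)$ gives $\delta(T)\subseteq\delta((T:a))$, so $y\in\delta((T:a))$. Either way the required implication holds, and $(T:a)$ is $\phi$-$\delta$-primary.

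\textbf{Part (ii).} Since $\delta\leq\delta_{1}$ we get $\delta(T)\subseteq\delta_{1}(T)$ for free, so everything reduces to the reverse inclusion $\delta_{1}(T)\subseteq\delta(T)$. I would argue by contradiction, leaning on the characterization in Theorem \ref{tm1}. Suppose there were $x\in\delta_{1}(T)$ with $x\notin\delta(T)$. Applying Theorem \ref{tm1}(iii) with $N=T$ and $a=x\in\Re-\delta(T)$ gives the dichotomy $(T:x)=T$ or $(T:x)=(\phi(T):x)$. Using the witness provided by $x\in\delta_{1}(T)$, I would rule out the first alternative and force the relevant product of $x$ into $\phi(T)$; from $(T:x)=(\phi(T):x)$ one then reads off that this product lies in $\phi(T)$. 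Finally, feeding this information back through the expansion and invoking the hypothesis $\delta_{1}(\phi(T))\subseteq\delta(T)$ places $x$ in $\delta(T)$, contradicting the choice of $x$. Hence $\delta_{1}(T)\subseteq\delta(T)$ and equality holds.

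\textbf{Main obstacle.} The delicate point is the middle of Part (ii): translating the purely order-theoretic membership $x\in\delta_{1}(T)$ into a \emph{concrete} product lying in $T$ (so that Theorem \ref{tm1}(iii) and the $\phi$-$\delta$-primary property can bite), and, symmetrically, recognizing that a product landing in $\phi(T)$ forces $x\in\delta_{1}(\phi(T))$. When $\delta_{1}$ is the radical this is transparent, since $x\in\sqrt{T}$ means exactly $x^{n}\in T$ for some $n$: minimality of $n$ excludes $(T:x)=T$, while $x^{n}\in\phi(T)$ gives $x\in\sqrt{\phi(T)}=\delta_{1}(\phi(T))\subseteq\delta(T)$. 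An alternative packaging is the dichotomy ``$T$ is $\delta$-primary, or $T^{2}\subseteq\phi(T)$'' in the spirit of Theorem \ref{thm4.3}, which in the second case gives $\delta_{1}(T)=\delta_{1}(T^{2})\subseteq\delta_{1}(\phi(T))\subseteq\delta(T)$. For a general expansion $\delta_{1}$, however, there is no product characterization of $\delta_{1}(T)$, so making this bridge rigorous is precisely where one must either specialize $\delta_{1}$ or impose an extra compatibility between $\delta_{1}$ and multiplication.
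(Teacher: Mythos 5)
Your part (i) is correct and is essentially the paper's proof: rewrite $x\circ y\in(T:a)-\phi(T:a)$ as a triple product in $T-\phi(T)$ (using the hypothesis $(\phi(T):a)=\phi(T:a)$ to rule the product out of $\phi(T)$), apply the $\phi$-$\delta$-primary property of $T$ to a regrouping, and pass from $\delta(T)$ to $\delta((T:a))$ via $T\subseteq(T:a)$ and monotonicity. The paper groups the product as $(a\circ m)\circ x$ where you group it as $(x\circ a)\circ y$; by commutativity these are interchangeable, and your preliminary remarks (properness of $(T:a)$, single-valuedness of $\circ$ in a Krasner hyperring) are correct and in fact slightly more careful than the paper.

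For part (ii), the obstacle you flag dissolves once the notation is decoded: $\delta_{1}$ in the statement is not an arbitrary expansion above $\delta$ but the specific radical expansion $\delta_{1}(N)=\sqrt{N}$ from the paper's list of examples --- the paper's proof opens with $\delta(T)\subseteq\sqrt{T}=\delta_{1}(T)$ and works throughout with $\sqrt{T}$ and $\sqrt{\phi(T)}$. Under that reading, the radical-case argument in your ``main obstacle'' paragraph \emph{is} the paper's proof: take $x\in\sqrt{T}$ with minimal $k$ such that $x^{k}\in T$; if $k=1$ then $x\in T\subseteq\delta(T)$; if $x\in\sqrt{\phi(T)}=\delta_{1}(\phi(T))$ the hypothesis $\delta_{1}(\phi(T))\subseteq\delta(T)$ finishes; otherwise $x^{k}\notin\phi(T)$ and Theorem \ref{tm1} forces $x\in\delta(T)$, the alternative $x^{k-1}\in T$ being excluded by minimality of $k$ (the paper invokes the ideal form (iv) with $x^{k-1}\circ\Re$ and $x\circ\Re$, you invoke the colon form (iii); these are equivalent). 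Your secondary packaging via the dichotomy ``$T$ is $\delta$-primary or $T^{2}\subseteq\phi(T)$'' also works, again because $\sqrt{T}=\sqrt{T^{2}}$ is a property of the radical. Finally, your caution about general $\delta_{1}$ is not mere prudence but substantively correct: for an arbitrary expansion $\delta_{1}\geq\delta$ the statement is false. For instance, in $\Re=\mathbb{Z}$ take $\delta=\delta_{0}$, $\phi=\phi_{0}$, $T=(p)$ with $p$ prime, and $\delta_{1}=\delta_{ann}$; then $T$ is $\phi$-$\delta$-primary, $\delta_{1}(\phi(T))=ann(ann(0))=0\subseteq T=\delta(T)$, yet $\delta_{1}(T)=ann(ann((p)))=\mathbb{Z}\neq T$. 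So your proof, specialized to the radical as you indicate, is complete and coincides with the paper's; the only thing you were missing is that the paper's $\delta_{1}$ already denotes the radical.
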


\begin{proof}
$(i)\ $Let $T$ be a $\phi$-$\delta$-primary hyperideal of $\Re$ such that
$(\phi(T):a)=\phi(T:a)$, for each $a\in\Re.$ We show that $(T:a)$ is a
$\phi$-$\delta$-primary hyperideal of $\Re.$ For this, let we take $x,m\in
\Re$ such that $x\circ m\in(T:a)-\phi(T:a).$ Then we have $x\circ a\circ
m\in T.$ Since $(\phi(T):a)=\phi(T:a),$ then we also have $x\circ a\circ
m\notin\phi(T).$ Since $T$ is a $\phi$-$\delta$-primary hyperideal, then we get
either $x\in\delta(T)$ or $a\circ m\in T $ implying either $x\in\delta(T:a)$
or $m\in(T:a).$ Therefore, $(T:a) $ is a $\phi$-$\delta$-primary hyperideal
of $\Re.$

$(ii)$ Since $\delta\leq\delta_{1}, $ then we have $\delta(T)\subseteq\sqrt
{T}=\delta_{1}(T).$ For the converse, take $x\in\sqrt{T}.$ Then there exists
minimal $k\in%
\mathbb{N}
$ such that $x^{k}\in T,$ that is, $x^{k-1}\notin T.$ If $k=1,$ then we have
$x\in T\subseteq\delta(T).$ Now, assume that $k>1.$ We have two cases. Case
1\textbf{:} Let $x\in\delta_{1}(\phi(T))=\sqrt{\phi(T)}.$ Then we have
$x\in\delta(T)$. Case 2: Let $x\notin\sqrt{\phi(T)}.$ Then we have
$x^{k}\notin\phi(T),$ that is, $x^{k-1}\circ(x\circ\Re)\subseteq T$ and
$x\circ(x^{k-1}\circ\Re)\nsubseteq\phi(T).$ Since $T$ is a $\phi$-$\delta
$-primary hyperideal of $\Re,$ then by the Theorem \ref{tm1}, $x\in\delta(T)$ or
$x^{k-1}\circ\Re\subseteq T.$ The latter case is impossible, since
$x^{k-1}\notin T.$ Thus in both cases, we have $x\in\delta(T).$ Therefore,
$\delta(T)=\delta_{1}(T).$
\end{proof}

\begin{theorem}
Let $T$ be a $\phi$-$\delta$-primary hyperideal of $\Re$ such that
$\delta(T)\circ T\nsubseteq\phi(T).$ Then $T$ is a $\delta$-primary
hyperideal of $\Re.$
\end{theorem}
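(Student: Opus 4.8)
The plan is to mimic the arguments of Theorems~\ref{thm5} and \ref{thm4.3} as closely as possible: I take an arbitrary product $a\circ b\in T$ and try to force $a\in T$ or $b\in\delta(T)$. If $a\circ b\notin\phi(T)$ this is immediate from the definition of a $\phi$-$\delta$-primary hyperideal, so I may assume $a\circ b\in\phi(T)$ and, aiming for a contradiction, that $a\notin T$ and $b\notin\delta(T)$. The only two structural tools I will use are that $\phi(T)$ is a hyperideal and the reversibility of $(\Re,\oplus)$: if all but one summand of a finite hypersum lie in $\phi(T)$, then the whole hypersum lies in $\phi(T)$ exactly when the remaining summand does. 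This is the device that lets perturbation arguments succeed.

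First I would harvest two absorption facts. For $a\circ T\subseteq\phi(T)$: if some $m\in T$ had $a\circ m\notin\phi(T)$, then $a\circ(b\oplus m)=(a\circ b)\oplus(a\circ m)\subseteq T$ is not contained in $\phi(T)$, so some $w\in b\oplus m$ satisfies $a\circ w\in T-\phi(T)$; the definition forces $w\in\delta(T)$ (since $a\notin T$), whence $b\in w\ominus m\subseteq\delta(T)$, a contradiction. By commutativity the symmetric statement $T\circ b\subseteq\phi(T)$ follows. Feeding both back into a four-term perturbation, for $t',t''\in T$ the product $(a\oplus t')\circ(b\oplus t'')=(a\circ b)\oplus(a\circ t'')\oplus(t'\circ b)\oplus(t'\circ t'')$ has its first three summands in $\phi(T)$; since every element of $a\oplus t'$ lies outside $T$ and every element of $b\oplus t''$ lies outside $\delta(T)$, the $\phi$-$\delta$-primary property pushes the whole product into $\phi(T)$, and reversibility then yields $t'\circ t''\in\phi(T)$. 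Thus $T^{2}\subseteq\phi(T)$.

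Now I invoke the hypothesis $\delta(T)\circ T\nsubseteq\phi(T)$ to choose $d\in\delta(T)$ and $t\in T$ with $d\circ t\notin\phi(T)$; note $d\circ t\in T$, and since $T^{2}\subseteq\phi(T)$ this already forces $d\notin T$. The decisive move is to produce one product in $T-\phi(T)$ whose two factors lie outside $T$ and outside $\delta(T)$ respectively, contradicting that $T$ is $\phi$-$\delta$-primary. This runs cleanly provided the mixed term behaves: if $d\circ b\in T$, then because $d\notin T$ and $b\notin\delta(T)$ the definition forbids $d\circ b\in T-\phi(T)$, so in fact $d\circ b\in\phi(T)$; consequently $d\circ(b\oplus t)=(d\circ b)\oplus(d\circ t)\subseteq T$ is not contained in $\phi(T)$, some $w\in b\oplus t$ has $d\circ w\in T-\phi(T)$, and here $w\notin\delta(T)$ (as $b\notin\delta(T)$, $t\in T\subseteq\delta(T)$) while $d\notin T$ — exactly the forbidden configuration, giving $a\in T$ or $b\in\delta(T)$ in the direct reading.

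The step I expect to be the real obstacle is precisely the mixed term $d\circ b$. In Theorems~\ref{thm5} and \ref{thm4.3} both perturbing elements are drawn from $T$, so every cross term automatically lands in $T$ and the analogous product is visibly contained in $T-\phi(T)$; here, however, one perturbation $d$ comes from the strictly larger set $\delta(T)$, so $d\circ b$ is only guaranteed to lie in $\delta(T)$ and may escape $T$, in which case $d\circ(b\oplus t)$ leaves $T$ and the appeal above collapses. To close this residual case I would perturb again, now recruiting $a$: using $a\circ b\in\phi(T)$ to cancel the runaway contribution of $d\circ b$, one checks that every $s\in a\oplus d$ lies outside $T$ (since $s\circ b\in(a\circ b)\oplus(d\circ b)$ cannot lie in $T$), and one then tries to trap an element of $(a\oplus d)\circ(b\oplus t)$, corrected by a suitable member of $T$, inside $T-\phi(T)$ with its second factor still outside $\delta(T)$. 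Reconciling $d\circ b\notin T$ with the three absorption identities $a\circ T\subseteq\phi(T)$, $T\circ b\subseteq\phi(T)$ and $T^{2}\subseteq\phi(T)$ is the technical crux, and is where the expansion axiom $T\subseteq\delta(T)$ must be exploited most carefully.
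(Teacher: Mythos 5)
Your opening stages are correct and match the paper's technique: the reduction to $a\circ b\in\phi(T)$, the absorption facts $a\circ T\subseteq\phi(T)$ and $T\circ b\subseteq\phi(T)$ via one-element perturbations and reversibility, the four-term perturbation giving $T^{2}\subseteq\phi(T)$, the deduction $d\notin T$, and the sub-case $d\circ b\in T$ (where $d\circ b\in T-\phi(T)$ is killed directly by the primary property and $d\circ b\in\phi(T)$ is killed via $d\circ(b\oplus t)$) are all sound. But the proposal is not a proof: the residual case $d\circ b\notin T$ is never closed, and your final paragraph is an announced plan, not an argument. Moreover, the rescue you sketch cannot work as described. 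Once $d\circ b\notin T$, reversibility shows that \emph{every} element of $d\circ(b\oplus t)$, of $(a\oplus d)\circ b$, and of $(a\oplus d)\circ(b\oplus t)=a\circ b\oplus a\circ t\oplus d\circ b\oplus d\circ t$ lies \emph{outside} $T$ (if some element $u$ of such a hypersum were in $T$, then $d\circ b\in u\ominus(\text{terms in }T)\subseteq T$, a contradiction). So none of the products available to your second perturbation lands in $T-\phi(T)$, which is the only place the $\phi$-$\delta$-primary hypothesis says anything; and ``correcting by a suitable member of $T$'' destroys the product structure — an element of $u\ominus t_{0}$ is not of the form $x\circ y$, so the defining implication cannot be applied to it. As it stands the hard case is open, exactly as you concede.

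It is worth noting how this compares with the paper's own proof, which argues directly rather than by contradiction: after disposing of the case $a\circ T\nsubseteq\phi(T)$, it writes ``Similarly, we may assume that $\delta(T)\circ m\subseteq\phi(T)$'' and then runs the four-term perturbation $(a\oplus b)\circ(m\oplus m')$ with $b\in\delta(T)$, $m'\in T$, $b\circ m'\notin\phi(T)$. That ``similarly'' is precisely your stuck case in disguise: the genuinely symmetric argument (perturbing by elements of $T$) only yields $T\circ m\subseteq\phi(T)$, whereas for $d\in\delta(T)$ the mixed product $d\circ m$ need not lie in $T$ at all, and when it does lie in $T$ the case closes exactly as in your third paragraph. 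So you have correctly isolated the real obstruction — the possibility $\delta(T)\circ(\text{second factor})\nsubseteq T$ — which the paper passes over without proof; but identifying the crux is not the same as resolving it, and your proposal supplies no mechanism that the $\phi$-$\delta$-primary hypothesis could act on in that case. To complete the argument you would need a genuinely new idea for the case $d\circ b\notin T$ (or a proof that this configuration is incompatible with the standing identities $a\circ b\in\phi(T)$, $a\circ T\subseteq\phi(T)$, $T\circ b\subseteq\phi(T)$, $T^{2}\subseteq\phi(T)$, $d\circ t\in T-\phi(T)$), not another round of the same perturbation.
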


\begin{proof}
Let $a\circ m\in T $, for some $a,m\in\Re.$ If $a\circ m\notin\phi(T),$ then we
conclude either $a\in\delta(T)$ or $m\in T$ as $T$ is an $\phi$-$\delta
$-primary hyperideal of $\Re$. Assume that $a\circ m\in\phi(T).$ If
$a\circ T\nsubseteq\phi(T),$ then there exists $n\in T\ $such that $a\circ
n\notin\phi(T).$ Thus we have $a\circ(m\oplus n)\subseteq T-\phi(T),$ which
implies either $a\in\delta(T)$ or $m\oplus n\subseteq T.$ Then we get
$a\in\delta(T)$ or $m\in T,$ which completes the proof. Assume that
$a\circ T\subseteq\phi(T).$ Similarly, we may assume that $\delta(T)\circ
m\subseteq\phi(T).$ As $\delta(T)\circ T\nsubseteq\phi(T),$ we can find
$b\in\delta(T)$ and $m^{\prime}\in T$ such that $b\circ m^{\prime}\notin
\phi(T).$ Then we conclude that $(a\oplus b)\circ(m\oplus m^{\prime
})\subseteq T-\phi(T).$ Since $T$ is a $\phi$-$\delta$-primary hyperideal of
$\Re$, then we have either $a\oplus b\subseteq\delta(T)$ or $m\oplus m^{\prime
}\subseteq T,$ which implies that $a\in\delta(T)$ or $m\in T.$ Therefore,
$T$ is a $\delta$-primary hyperideal of $\Re.$
\end{proof}

\begin{definition}
i) \cite{18} A hyperideal expansion $\delta$ is said to be global if for any
hyperring good homomorphism $\mu:\Re\rightarrow\hat{S},$ $\delta(\mu
^{-1}(M))=\mu^{-1}(\delta(M))$, for each $M\in L(\hat{S}).$

ii) A hyperideal reduction $\phi$ is said to be a \textit{global} if for any
homomorphism $\mu:\Re\rightarrow\hat{S}, \phi(\mu^{-1}(M))=\mu^{-1}%
(\phi(M))$, for each $M\in L(\hat{S}).$
\end{definition}

For instance, the hyperideal reductions $\phi_{0}, \phi_{1}$ and the
hyperideal expansions $\delta_{0}, \delta_{1}$ are both \textit{global.}

\begin{theorem}
\label{thom} Let $\mu$ be a good homomorphism from Krasner hyperring
$(\Re,\oplus,\circ)$ into a Krasner hyperring $(\hat{S},+,\cdot)$. The following statements hold:

(i)\ Let $\mu:\Re\rightarrow\hat{S}$ be a good homomorphism and $M$ be a
$\phi$-$\delta$-primary hyperideal of $\hat{S}$ such that $\phi$ is global
reduction function and $\delta$ is global expansion function. Then $\mu
^{-1}(M)=\Re$ or $\mu^{-1}(M)$ is a $\phi$-$\delta$-primary hyperideal of
$\Re.$

(ii) Let $\mu:\Re\rightarrow\hat{S}$ be a good epimorphism and $N$ be a
hyperideal of $\Re$ containing $Ker(\mu).$ Suppose that $\phi$ is global
reduction function and $\delta$ is global expansion function. Then $N$ is a
$\phi$-$\delta$-primary hyperideal of $\Re$ if and only if $\mu(N)$ is a
$\phi$-$\delta$-primary hyperideal of $\hat{S}.$
\end{theorem}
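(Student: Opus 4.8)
The plan is to prove both parts by unwinding the definition of $\phi$-$\delta$-primary hyperideal through the preimage/image under $\mu$, leaning heavily on the hypothesis that $\phi$ and $\delta$ are \emph{global}, which is precisely what lets me commute these functions past $\mu^{-1}$ (and, in part (ii), past $\mu$).

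Let me think about part (i). I have a good homomorphism $\mu:\Re\to\hat S$ and a $\phi$-$\delta$-primary hyperideal $M$ of $\hat S$. If $\mu^{-1}(M)=\Re$ there is nothing to prove, so assume $\mu^{-1}(M)$ is proper. First I would take $a,b\in\Re$ with $a\circ b\in\mu^{-1}(M)-\phi(\mu^{-1}(M))$. Since $\phi$ is global, $\phi(\mu^{-1}(M))=\mu^{-1}(\phi(M))$, so this says $\mu(a\circ b)=\mu(a)\cdot\mu(b)\in M$ but $\mu(a)\cdot\mu(b)\notin\phi(M)$; that is, $\mu(a)\cdot\mu(b)\in M-\phi(M)$. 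Because $M$ is $\phi$-$\delta$-primary in $\hat S$, either $\mu(a)\in M$ or $\mu(b)\in\delta(M)$. In the first case $a\in\mu^{-1}(M)$; in the second, using globality of $\delta$ to write $\delta(\mu^{-1}(M))=\mu^{-1}(\delta(M))$, I get $b\in\mu^{-1}(\delta(M))=\delta(\mu^{-1}(M))$. That is exactly the required conclusion.

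Part (ii) is the reverse direction plus a descent, and here I expect the real work to be. For the forward implication, suppose $N$ is $\phi$-$\delta$-primary and $Ker(\mu)\subseteq N$; I take $x,y\in\hat S$ with $x\cdot y\in\mu(N)-\phi(\mu(N))$. Since $\mu$ is an epimorphism I can pick preimages $a,b\in\Re$ with $\mu(a)=x,\ \mu(b)=y$. The key bookkeeping point is that for a hyperideal $N\supseteq Ker(\mu)$ one has $\mu^{-1}(\mu(N))=N$, so the condition $x\cdot y\in\mu(N)$ translates to $a\circ b\in\mu^{-1}(\mu(N))=N$, and globality gives $\phi(\mu(N))$-membership back to $\phi(N)$-membership, yielding $a\circ b\in N-\phi(N)$. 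Applying $\phi$-$\delta$-primaryness of $N$ gives $a\in N$ or $b\in\delta(N)$; pushing forward through $\mu$ and using globality of $\delta$ ($\mu(\delta(N))\subseteq\delta(\mu(N))$, which follows from $\delta(\mu^{-1}(\mu(N)))=\mu^{-1}(\delta(\mu(N)))$ together with $\mu^{-1}\mu N=N$) gives $x\in\mu(N)$ or $y\in\delta(\mu(N))$. For the converse implication I combine part (i) with the identity $\mu^{-1}(\mu(N))=N$: if $\mu(N)$ is $\phi$-$\delta$-primary in $\hat S$, then by (i) $\mu^{-1}(\mu(N))$ is either $\Re$ or $\phi$-$\delta$-primary in $\Re$, and since $\mu^{-1}(\mu(N))=N$ is proper, $N$ is $\phi$-$\delta$-primary.

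The main obstacle, and the place to be careful, is the interplay between the global property and the set-theoretic identities $\mu^{-1}(\mu(N))=N$ (valid because $Ker(\mu)\subseteq N$) and $\mu(\mu^{-1}(M))=M$ (valid because $\mu$ is surjective). I must also verify that $\mu(N)$ is genuinely a hyperideal of $\hat S$ and that $\mu(a\circ b)=\mu(a)\cdot\mu(b)$ at the level of hyperoperations, which is exactly what ``good homomorphism'' supplies; in the hyperring setting one has to confirm that membership statements like $a\circ b\in N$ survive translation since $a\circ b$ is a \emph{set}. I would state globality as the rewriting rules $\phi(\mu^{-1}(M))=\mu^{-1}(\phi(M))$ and $\delta(\mu^{-1}(M))=\mu^{-1}(\delta(M))$ up front, derive the forward-image versions $\phi(\mu(N))$ and $\delta(\mu(N))$ compatibility from them via $\mu^{-1}\mu N=N$, and then let the definition-chasing in (i) and (ii) proceed mechanically.
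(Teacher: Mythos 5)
Your proposal is correct and follows essentially the same route as the paper: part (i) by commuting $\phi$ and $\delta$ past $\mu^{-1}$ via globality, and part (ii) by lifting elements through surjectivity, translating $\phi(\mu(N))$-membership back via globality together with $\mu^{-1}(\mu(N))=N$, and obtaining the converse directly from (i). The only presentational difference is that the paper verifies $a\circ m\in\mu^{-1}(\mu(N))\Rightarrow a\circ m\in N$ explicitly through reversibility of the canonical hypergroup (finding $t\in a\circ m\ominus x$ with $\mu(t)=0_{\hat S}$, so $a\circ m\in t\oplus x\subseteq Ker(\mu)\oplus N\subseteq N$), whereas you cite the identity $\mu^{-1}(\mu(N))=N$ as standard -- a gap you correctly flag, and harmless; note also that in a Krasner hyperring only $\oplus$ is multivalued, so $a\circ b$ is a single element and your caution about set-valued products is unnecessary.
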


\begin{proof}
$(i)$\ Let $\mu:\Re\rightarrow\hat{S}$ be a hyperring homomorphism and $M$ be
a $\phi$-$\delta$-primary hyperideal of $\hat{S}$ such that $\mu^{-1}%
(M)\neq\Re.$ Assume $a\circ m\in\mu^{-1}(M)-\phi(\mu^{-1}(M)),$ for some
$a,m\in\Re.$ Since $\phi$ is global, then $\phi(\mu^{-1}(M))=\mu^{-1}(\phi
(M))$ and thus $a\circ m\in\mu^{-1}(M)-\mu^{-1}(\phi(M)).$ This implies that
$\mu(a\circ m)=\mu(a)\cdot\mu(m)\in M-\phi(M).$ Since $M$ is a $\phi
$-$\delta$-primary hyperideal of $\hat{S},$ then we have either $\mu
(a)\in\delta(M)$ or $\mu(m)\in M.$ Since $M\subseteq\mu^{-1}(M)$, then we get
$a\in\mu^{-1}(\delta(M))=\delta(\mu^{-1}(M))$ or $m\in\mu^{-1}(M).$%
Therefore, $\mu^{-1}(M)$ is a $\phi$-$\delta$-primary hyperideal.

$(ii)$\ Let us suppose that $\mu(N)$ is a $\phi$-$\delta$-primary hyperideal of
$\hat{S},$ where $N$ is a hyperideal of $\Re$ containing $Ker(\mu).$ By $(i)$, $\mu^{-1}(\mu(N))=N$ is a $\phi$-$\delta$-primary hyperideal of
$\Re.$ For the converse, let $N$ be a $\phi$-$\delta$-primary hyperideal of
$\Re$ and $a^{\prime}\cdot m^{\prime}\in\mu(N)-\phi(\mu(N))$, for some
$a^{\prime}, m^{\prime}\in\hat{S}$. Then $\exists a,m\in\Re$ with
$\mu(a)=a^{\prime}$ and $\mu(m)=m^{\prime},$ since $\mu$ is surjective.
$a^{\prime}\cdot m^{\prime}=\mu(a)\cdot\mu(m)=\mu(a\circ m)=\mu(x)\in
\mu(N)-\phi(\mu(N))$, for some $x\in\Re.$ So $0\in\mu(a\circ m)-\mu
(x)=\mu(a\circ m\ominus x).$ Hence there exists $t\in a\circ m\ominus x$ such
that $\mu(t)=0_{\hat{S}}.$ $a\circ m\in t\oplus x\subseteq Ker(\mu)+N\subseteq
N+N\subseteq N.$ Then $\phi$ is global and $Ker(\mu)\subseteq N,$ we have
$\phi(\mu(N))=\mu(\phi(N))$ and also $a\circ m\in N-\phi(N).$ Since $N$ is a
$\phi$-$\delta$-primary hyperideal of $\Re,$ then we have either $a\in
\delta(N)$ or $m\in N.$ Since $N\subseteq\mu(N),$ then we have $a^{\prime}%
=\mu(a)\in\delta(\mu(N))$ or $m^{\prime}=\mu(m)\in\mu(N).$ Therefore,
$\mu(N)$ is a $\phi$-$\delta$-primary hyperideal of $\hat{S}.$
\end{proof}

As an instant consequence of the previous theorem, we get the following
explicit results.

\begin{corollary}
\label{cfac}(i) Let $N$ be a $\phi$-$\delta$-primary hyperideal of $\Re$ and
$M$ be a hyperideal of $\Re$ with $M\nsubseteq N.$ Suppose that $\phi$ is
global reduction function and $\delta$ is global expansion function. Then
$N\cap M$ is a $\phi$-$\delta$-primary hyperideal of $M.$

(ii) Let $N$ and $M$ be two hyperideals of $\Re$ with $M\subseteq N$. Suppose
that $\phi$ is global reduction function and $\delta$ is a global expansion
function. Then $N$ is a $\phi$-$\delta$-primary hyperideal of $\Re$ if and
only if $N/M$ is a $\phi$-$\delta$-primary hyperideal of $\Re/M.$
\end{corollary}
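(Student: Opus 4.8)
The plan is to read both statements as immediate instances of Theorem \ref{thom}, so that the only genuine work is choosing a suitable good homomorphism in each case and checking that the hypotheses transfer; everything deeper was already absorbed into the proof of that theorem.

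For part (i), I would take the canonical inclusion $\iota : M \hookrightarrow \Re$, which is a good homomorphism of Krasner hyperrings, and observe that $\iota^{-1}(N) = N \cap M$. Since $\phi$ is a global reduction and $\delta$ a global expansion, the hypotheses of Theorem \ref{thom}(i) are satisfied with $\hat{S} = \Re$ and the $\phi$-$\delta$-primary hyperideal $N$. That theorem then yields the dichotomy $\iota^{-1}(N) = M$, or else $\iota^{-1}(N)$, which equals $N \cap M$, is a $\phi$-$\delta$-primary hyperideal of $M$. It remains only to rule out the first alternative: the hypothesis $M \nsubseteq N$ gives $N \cap M \subsetneq M$, so $N \cap M$ is proper in $M$ and is therefore the $\phi$-$\delta$-primary hyperideal promised.

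For part (ii), I would instead use the canonical projection $\pi : \Re \to \Re/M$, a good epimorphism with $Ker(\pi) = M$. Because $M \subseteq N$, the hyperideal $N$ contains $Ker(\pi)$, so the hypotheses of Theorem \ref{thom}(ii) hold with $\hat{S} = \Re/M$. As $\pi(N) = N/M$, that theorem delivers the biconditional directly: $N$ is a $\phi$-$\delta$-primary hyperideal of $\Re$ if and only if $N/M = \pi(N)$ is a $\phi$-$\delta$-primary hyperideal of $\Re/M$.

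The remaining steps are routine, and the points requiring care are exactly those inherited from Theorem \ref{thom}: that $\iota$ and $\pi$ are genuinely good homomorphisms, that $\iota^{-1}(N) = N \cap M$ and $\pi(N) = N/M$, and that globalness supplies the identities $\phi(\iota^{-1}(N)) = \iota^{-1}(\phi(N))$ together with the analogous relation for $\delta$, which are precisely what the ambient theorem consumes. I expect the main obstacle to be foundational rather than computational: a hyperideal $M$ need not carry the nonzero identity of $\Re$, so treating $M$ as a Krasner hyperring in the sense of the standing hypotheses is a mild abuse. This is harmless here, since Theorem \ref{thom} is driven solely by the good-homomorphism and globalness conditions, but it is worth flagging that part (i) is really a statement about the induced $\phi$-$\delta$-primary structure on $M$.
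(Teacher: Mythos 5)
Your proposal is correct and matches the paper exactly: the paper offers this corollary as ``an instant consequence'' of Theorem \ref{thom}, with the intended instantiations being precisely your inclusion $\iota: M \hookrightarrow \Re$ for (i) (using $M \nsubseteq N$ to exclude $\iota^{-1}(N)=M$) and the projection $\pi: \Re \to \Re/M$ with $Ker(\pi)=M \subseteq N$ for (ii). Your flag about $M$ not inheriting the nonzero identity of $\Re$ is a legitimate foundational caveat that the paper silently passes over, and your observation that Theorem \ref{thom} only consumes the good-homomorphism and globalness hypotheses is the right way to dispose of it.
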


\begin{theorem}
\label{tweak} Let $N$ be a hyperideal of $\Re.$ Then $N$ is a $\phi
$-$\delta$-primary hyperideal of $\Re$ if and only if $N/\phi(N)$ is a
weakly $\delta$-primary hyperideal of $\Re/\phi(N).$
\end{theorem}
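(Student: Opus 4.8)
The plan is to transport the defining implication of a $\phi$-$\delta$-primary hyperideal through the canonical projection $\pi\colon\Re\rightarrow\Re/\phi(N)$, $\pi(a)=a\oplus\phi(N)=:\bar a$, and to exploit the fact that modding out by $\phi(N)$ collapses the reduction $\phi$ into the zero reduction $\phi_0$. Recall that a weakly $\delta$-primary hyperideal is by definition a $\phi_0$-$\delta$-primary one, and that the zero of the quotient canonical hypergroup $\Re/\phi(N)$ is the coset $\bar 0=\phi(N)$; thus $N/\phi(N)-\{\bar 0\}$ upstairs plays exactly the role that $N-\phi(N)$ plays downstairs.

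First I would record a translation dictionary, all items resting on $\phi(N)\subseteq N\subseteq\delta(N)$ and on the quotient hyperoperation $\bar a\circ\bar b=\{\bar c:c\in a\circ b\}$. Since $\phi(N)\subseteq N$, the projection satisfies $\pi^{-1}(N/\phi(N))=N$, so $\bar a\circ\bar b\subseteq N/\phi(N)$ iff $a\circ b\subseteq N$, and $\bar a\in N/\phi(N)$ iff $a\in N$. Likewise $\bar a\circ\bar b\nsubseteq\{\bar 0\}$ iff $a\circ b\nsubseteq\phi(N)$, because $\bar c=\bar 0$ precisely when $c\in\phi(N)$. Finally, interpreting $\delta$ on the quotient by the natural rule $\delta(N/\phi(N))=\delta(N)/\phi(N)$, which is legitimate since $\phi(N)\subseteq N\subseteq\delta(N)$ makes $\delta(N)/\phi(N)$ a well-defined hyperideal of $\Re/\phi(N)$, we get $\bar b\in\delta(N/\phi(N))$ iff $b\in\delta(N)$. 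Combining the first three items shows that $\bar a\circ\bar b\in N/\phi(N)-\{\bar 0\}$ holds exactly when $a\circ b\in N-\phi(N)$.

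With this dictionary the equivalence becomes a pure transcription in both directions. For the forward implication, I would assume $N$ is $\phi$-$\delta$-primary and take $\bar a\circ\bar b\in N/\phi(N)-\{\bar 0\}$; the dictionary gives $a\circ b\in N-\phi(N)$, hence $a\in N$ or $b\in\delta(N)$, and translating back yields $\bar a\in N/\phi(N)$ or $\bar b\in\delta(N/\phi(N))$, i.e.\ $N/\phi(N)$ is weakly $\delta$-primary. For the converse I would assume $N/\phi(N)$ is weakly $\delta$-primary and take $a\circ b\in N-\phi(N)$; then $\bar a\circ\bar b\in N/\phi(N)-\{\bar 0\}$, so $\bar a\in N/\phi(N)$ or $\bar b\in\delta(N/\phi(N))$, and translating back gives $a\in N$ or $b\in\delta(N)$, so $N$ is $\phi$-$\delta$-primary.

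The hard part will not be either implication but the set-theoretic care in the dictionary, since $a\circ b$ is a subset of $\Re$ rather than a single element. The step I would watch most closely is the equivalence $\bar a\circ\bar b\nsubseteq\{\bar 0\}\Leftrightarrow a\circ b\nsubseteq\phi(N)$ (and not merely $0\notin a\circ b$), which is exactly where $\ker\pi=\phi(N)$ is used, together with verifying that the induced expansion on the quotient really is $\delta(N)/\phi(N)$. Once the correspondence $\delta(N/\phi(N))=\delta(N)/\phi(N)$ is fixed, no globality hypothesis on $\phi$ or $\delta$ is required, and the argument closes.
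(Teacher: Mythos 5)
Your proposal is correct and follows essentially the same route as the paper: both directions are proved by translating along the canonical projection, using that $a\circ b\in N-\phi(N)$ downstairs corresponds exactly to $(a\oplus\phi(N))\circ(b\oplus\phi(N))\subseteq N/\phi(N)-\{0_{\Re/\phi(N)}\}$ upstairs. Your explicit fixing of the quotient expansion as $\delta(N/\phi(N))=\delta(N)/\phi(N)$ is a point the paper leaves tacit (it matches the convention $\phi_{M}(N/M)=(\phi(N)\oplus M)/M$ used earlier), so your write-up is if anything slightly more careful, but it is the same argument.
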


\begin{proof}
Assume that $N$ is a $\phi$-$\delta$-primary hyperideal of $\Re.$ Let
$(a\oplus\phi(N))\circ(m\oplus\phi(N))\subseteq N/\phi(N)-\{0_{\Re/\phi
(N)}\}.$ Then we have $a\circ m\in N-\phi(N).$ Since $N$ is a $\phi$-$\delta
$-primary hyperideal of $\Re,$ then we get $a\in\delta(N)$ or $m\in N,$ which
implies $a\oplus\phi(N)\in\delta(N/\phi(N))$ or $m\oplus\phi(N)\subseteq
N/\phi(N).$ Hence, $N/\phi(N)$ is a weakly $\delta$-primary hyperideal of
$\Re/\phi(N).$ For the converse, let $a,m\in\Re$ such that $a\circ m\in
N-\phi(N).$ This implies that $(a\oplus\phi(N))\circ(m\oplus\phi(N))\subseteq
N/\phi(N)-\{0_{\Re/\phi(N)}\}.$ Since $N/\phi(N)$ is a weakly $\delta$-primary
hyperideal of $\Re/\phi(N),$ then we get $a\oplus\phi(N)\in\delta(N/\phi(N))$ or
$m\oplus\phi(N)\subseteq N/\phi(N)$ implying $a\in\delta(N)$ or $m\in
N.$ Therefore, $N$ is a $\phi$-$\delta$-primary hyperideal of $\Re.$
\end{proof}

Let $\Re$ be a commutative Krasner hyperring, $S\subseteq\Re$ be a
multiplicatively closed subset of $\Re$ and $T$ be a proper hyperideal of
$\Re.$

\begin{proposition}
\label{ploc} Let $\phi_{S}:L(\Re
_{S})\rightarrow L(\Re_{S})\cup\{\emptyset\}$ be a hyperideal reduction
function and $\delta_{S}:L(\Re_{S})\rightarrow L(\Re_{S})$ be a hyperideal
expansion function such that $\delta_{S}(N_{S})=\delta(N)_{S}$, for each $N\in
L(\Re).$ If $T$ is a $\phi$-$\delta$-primary hyperideal such that
$T\cap S=\emptyset$ and $\phi(T)_{S}\subseteq\phi_{S}(T_{S}),$ then $T_{S}$
is a $\phi_{S}$-$\delta_{S}$-primary of $\Re_{S}.$
\end{proposition}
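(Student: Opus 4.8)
The plan is to mimic the localization arguments already carried out for $\phi$-prime and $\phi$-primary hyperideals in the previous two sections, replacing the primary conclusion ``$b^{k}\in T$'' by the expansion conclusion ``$b\in\delta(T)$.'' First I would record that $T_{S}$ is a \emph{proper} hyperideal of $\Re_{S}$, which is needed since a $\phi_{S}$-$\delta_{S}$-primary hyperideal must be proper: if some $s\in T\cap S$ existed, then $\frac{s}{s}=\frac{1}{1}\in T_{S}$ would force $T_{S}=\Re_{S}$, so the hypothesis $T\cap S=\emptyset$ is exactly what guarantees $T_{S}\neq\Re_{S}$.

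Next I would take a generic product $\frac{a}{s}\circ\frac{b}{t}\in T_{S}-\phi_{S}(T_{S})$ with $a,b\in\Re$ and $s,t\in S$, and pull it back to $\Re$. Writing $\frac{a}{s}\circ\frac{b}{t}=\frac{a\circ b}{s\circ t}$, membership in $T_{S}$ yields some $p\in S$ with $p\circ a\circ b\in T$. For the non-membership I would argue by contraposition using the hypothesis $\phi(T)_{S}\subseteq\phi_{S}(T_{S})$: if there were some $q\in S$ with $q\circ a\circ b\in\phi(T)$, then $\frac{a\circ b}{s\circ t}=\frac{q\circ a\circ b}{q\circ s\circ t}\in\phi(T)_{S}\subseteq\phi_{S}(T_{S})$, contradicting $\frac{a}{s}\circ\frac{b}{t}\notin\phi_{S}(T_{S})$. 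Hence no denominator-cleared representative lands in $\phi(T)$; in particular $p\circ a\circ b\notin\phi(T)$, so $(p\circ a)\circ b=p\circ a\circ b\in T-\phi(T)$ (recall the multiplication of a Krasner hyperring is single-valued and associative, so $p\circ a\circ b$ is a genuine element of $\Re$).

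Then I would apply the hypothesis that $T$ is $\phi$-$\delta$-primary to the element $(p\circ a)\circ b\in T-\phi(T)$, obtaining $p\circ a\in T$ or $b\in\delta(T)$. In the first case $\frac{a}{s}=\frac{p\circ a}{p\circ s}\in T_{S}$, since $p\circ a\in T$ and $p\circ s\in S$. In the second case the compatibility hypothesis $\delta_{S}(N_{S})=\delta(N)_{S}$ is what lets me push the conclusion back up: $b\in\delta(T)$ gives $\frac{b}{t}\in\delta(T)_{S}=\delta_{S}(T_{S})$. Either way $\frac{a}{s}\in T_{S}$ or $\frac{b}{t}\in\delta_{S}(T_{S})$, so $T_{S}$ is $\phi_{S}$-$\delta_{S}$-primary.

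I do not expect genuine difficulty in the logical skeleton, which is identical to the $\phi$-primary localization proposition of Section~3; the part requiring care is the bookkeeping for the reduction and expansion functions. The two auxiliary hypotheses are engineered precisely for this: $\phi(T)_{S}\subseteq\phi_{S}(T_{S})$ is used only to convert the single non-membership $\frac{a}{s}\circ\frac{b}{t}\notin\phi_{S}(T_{S})$ into the uniform statement $q\circ a\circ b\notin\phi(T)$ for all $q\in S$, while $\delta_{S}(T_{S})=\delta(T)_{S}$ is used only to de-localize the conclusion $b\in\delta(T)$. The one point worth double-checking is representative-independence: clearing denominators by an arbitrary $q\in S$ must not disturb membership in $\phi(T)$ in a way incompatible with the definition of $\phi_{S}$, so this is the single place where the precise definitions of $\phi_{S}$ and of localized hyperideals have to be invoked explicitly.
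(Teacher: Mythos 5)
Your proof is correct and takes essentially the same route as the paper's: pass to $p\circ a\circ b\in T-\phi(T)$ using $\phi(T)_{S}\subseteq\phi_{S}(T_{S})$, apply the $\phi$-$\delta$-primary hypothesis, and de-localize the conclusion via $\delta_{S}(T_{S})=\delta(T)_{S}$. The only differences are cosmetic: you group $p$ with the first factor (getting $p\circ a\in T$ or $b\in\delta(T)$) where the paper groups it with the second (getting $a\in\delta(T)$ or $p\circ m\in T$), which is equivalent by commutativity, and your preliminary check that $T\cap S=\emptyset$ makes $T_{S}$ proper is a harmless addition the paper leaves implicit.
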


\begin{proof}
Let $\frac{a}{s}\circ\frac{m}{t}\in T_{S}-\phi_{S}(T_{S})$, for some $a,m\in
\Re; s,t\in S.$ Then we have $p\circ a\circ m\in T-\phi(T)$, for some $p\in
S$, since $\phi(T)_{S}\subseteq\phi_{S}(T_{S}).$ Since $T$ is a $\phi
$-$\delta$-primary hyperideal of $\Re,$ then we have either $a\in\delta(T)$ or
$p\circ m\in T.$ If $a\in\delta(T),$ then $\frac{a}{s}\in\delta
(T)_{S}=\delta_{S}(T_{S})$. If $p\circ m\in T,$ then we have $\frac{m}%
{t}=\frac{p\circ m}{p\circ t}\in T_{S}.$ Therefore, $T_{S}$ is a $\phi_{S}%
$-$\delta_{S}$-primary hyperideal of $\Re_{S}.$
\end{proof}

Let $\varphi_{i}:L(\Re_{i}%
)\rightarrow L(\Re_{i})\cup\{\emptyset\}$ be hyperideal reduction functions
and $\gamma_{i}:L(\Re_{i})\rightarrow L(\Re_{i})$ be hyperideal expansion
functions for $i=1,2$. Suppose that $\Re=\Re_{1}\times\Re_{2}.$ Also, each
hyperideal $N$ of $\Re$ has the form $N=N_{1}\times N_{2},$ where $N_{i}%
$ is a hyperideal of $\Re_{i}.$ Furthermore, $\phi:L(\Re)\rightarrow
L(\Re)\cup\{\emptyset\}$ defined by $\phi(N_{1}\times N_{2})=\varphi
_{1}(N_{1})\times\varphi_{2}(N_{2})$ is a hyperideal reduction function and
also the function $\delta:L(\Re)\rightarrow L(\Re)$ defined by $\delta
(N_{1}\times N_{2})=\gamma_{1}(N_{1})\times\gamma_{2}(N_{2})$ is a hyperideal
expansion function.

\begin{proposition}
\label{propcart} Let the notation be as in the above pharagraph and
$N=N_{1}\times N_{2}.$ Then each of the types of $N$ are $\phi$-$\delta
$-primary hyperideals of $\Re=\Re_{1}\times\Re_{2}.$

(i) $N=N_{1}\times N_{2},$ where $N_{i}$ is a proper hyperideal of $\Re
_{i}$ with $\varphi_{i}(N_{i})=N_{i}.$

(ii) $N=N_{1}\times\Re_{2},$ where $N_{1}$ is $\gamma_{1}$-primary
hyperideal of $\Re_{1}$.

(iii) $N=\Re_{1}\times N_{2},$ where $N_{2}$ is $\gamma_{2}$-primary
hyperideal of $\Re_{2}.$

(iv) $N=N_{1}\times\Re_{2},$ where $N_{1}$ is $\varphi_{1}$-$\gamma_{1}%
$-primary hyperideal of $\Re_{1}$ and $\varphi_{2}(\Re_{2})=\Re_{2}$.

(v) $N=\Re_{1}\times N_{2},$ where $N_{2}\ $is $\varphi_{2}$-$\gamma_{2}%
$-primary hyperideal of $\Re_{2}$ and $\varphi_{1}(\Re_{1})=\Re_{1}$.
\end{proposition}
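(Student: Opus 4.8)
The plan is to verify the defining implication of a $\phi$-$\delta$-primary hyperideal separately for each listed form of $N$, always exploiting that multiplication in the Krasner hyperring $\Re=\Re_1\times\Re_2$ is componentwise, so that $(a_1,a_2)\circ(b_1,b_2)=(a_1\circ b_1,a_2\circ b_2)$ is a single element whose membership in $N=N_1\times N_2$ and in $\phi(N)=\varphi_1(N_1)\times\varphi_2(N_2)$ is decided coordinatewise. Throughout I would use the standing fact that any expansion $\gamma_i$ satisfies $\gamma_i(\Re_i)=\Re_i$, since $\Re_i\subseteq\gamma_i(\Re_i)\subseteq\Re_i$.

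For part (i) I would simply compute $\phi(N)=\varphi_1(N_1)\times\varphi_2(N_2)=N_1\times N_2=N$ from the hypothesis $\varphi_i(N_i)=N_i$, so $N-\phi(N)=\emptyset$ and the defining condition holds vacuously; this is the same device used for the product propositions in the earlier sections.

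For parts (ii) and (iii) I would argue by projecting to the relevant coordinate. Taking (ii) with $N=N_1\times\Re_2$, suppose $(a_1\circ b_1,a_2\circ b_2)\in N-\phi(N)$. Its first coordinate gives $a_1\circ b_1\in N_1$, and since $N_1$ is $\gamma_1$-primary, equivalently $\phi_{\emptyset}$-$\gamma_1$-primary by Example \ref{ex1}(v) (the reduction deletes nothing), the implication $a_1\circ b_1\in N_1\Rightarrow a_1\in N_1$ or $b_1\in\gamma_1(N_1)$ applies directly. In the first case $(a_1,a_2)\in N_1\times\Re_2=N$; in the second case $b_1\in\gamma_1(N_1)$ and $b_2\in\Re_2=\gamma_2(\Re_2)$, so $(b_1,b_2)\in\gamma_1(N_1)\times\gamma_2(\Re_2)=\delta(N)$. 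Part (iii) is the symmetric argument in the second coordinate. Here I need no hypothesis on $\varphi_2$, because the $\gamma_1$-primary property is unconditional on the reduction side.

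The more delicate parts are (iv) and (v), and the hypothesis $\varphi_2(\Re_2)=\Re_2$ does the real work. With $N=N_1\times\Re_2$ it forces $\phi(N)=\varphi_1(N_1)\times\Re_2$, whence $N-\phi(N)=(N_1-\varphi_1(N_1))\times\Re_2$. I would verify this set-difference identity explicitly: a pair $(x,y)$ lies outside $\varphi_1(N_1)\times\Re_2$ precisely when $x\notin\varphi_1(N_1)$, since the second coordinate $y\in\Re_2$ can never fail. The \emph{main obstacle} of the whole proof is exactly this reduction step: without $\varphi_2(\Re_2)=\Re_2$, non-membership in the product $\phi(N)$ is only a disjunction across coordinates and cannot be pinned to the first coordinate, so the $\varphi_1$-$\gamma_1$-primary hypothesis on $N_1$ (which only controls elements of $N_1-\varphi_1(N_1)$) could not be invoked. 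Once the identity is in hand, any $(a_1\circ b_1,a_2\circ b_2)\in N-\phi(N)$ yields $a_1\circ b_1\in N_1-\varphi_1(N_1)$, hence $a_1\in N_1$ or $b_1\in\gamma_1(N_1)$, and I would finish exactly as in (ii), again using $\gamma_2(\Re_2)=\Re_2$ to place the second coordinate of $b$ into $\delta(N)$. Part (v) is the symmetric statement with the roles of the two factors exchanged.
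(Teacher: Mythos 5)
Your proposal is correct and takes essentially the same route as the paper: (i) is handled vacuously via $N-\phi(N)=\emptyset$, (ii)--(iii) come down to the reduction-free case (the paper notes $N_1\times\Re_2$ is $\delta$-primary and cites Proposition \ref{p1}, where you verify the same fact directly), and (iv)--(v) are the same coordinatewise element chase hinging on $\varphi_2(\Re_2)=\Re_2$. If anything, your explicit derivation $N-\phi(N)=(N_1-\varphi_1(N_1))\times\Re_2$ is more careful than the paper's, which writes this difference as $(N_1-\varphi_1(N_1))\times(\Re_2-\varphi_2(\Re_2))$ --- a set that is literally empty under the hypothesis $\varphi_2(\Re_2)=\Re_2$, though the intended meaning is plainly your identity.
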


\begin{proof}
(i) Obviously $N$ is $\phi$-$\delta$-primary hyperideal, since $N_{1}\times
N_{2}-\phi(N_{1}\times N_{2})=\oslash$.

(ii) Let $N=N_{1}\times\Re_{2}$, where $N_{1}\ $is $\gamma_{1}$-primary
hyperideal of $\Re_{1}$. We can see that $N_{1}\times\Re_{2}$ is $\delta
$-primary hyperideal of $\Re_{1}$. By the Proposition \ref{p1}, $N$ is $\phi
$-$\delta$-primary hyperideal of $\Re.$

(iii) The proof is similar to (ii).

(iv) Suppose that $(a_{1},a_{2})\circ(m_{1},m_{2})\in N_{1}\times\Re_{2}%
-\phi(N_{1}\times\Re_{2})=\varphi_{1}(N_{1})\times\varphi_{2}(\Re_{2}).$ Then
$(a_{1}\circ m_{1},a_{2}\circ m_{2})\in(N_{1}-\varphi_{1}(N_{1}))\times
(\Re_{2}-\varphi_{2}(\Re_{2}).$ Since $N_{1}\ $is $\varphi_{1}$-$\gamma_{1}%
$-primary hyperideal of $\Re_{1},$ then we get either $a_{1}\in N_{1}$ or $m_{1}%
\in\gamma_{1}(N_{1}).$ So $(a_{1},a_{2})\in N_{1}\times\Re_{2}$ or
$(m_{1},m_{2})\in\gamma_{1}(N_{1})\times\Re_{2}\subseteq\gamma_{1}%
(N_{1})\times\gamma_{2}(\Re_{2}).$ It means $N$ is a $\phi$-$\delta$-primary
hyperideal of $\Re.$

(v) Proof is similar to (iv).
\end{proof}

\begin{theorem}
\label{thcart} Let the notation be as in the Proposition \ref{propcart},
$N=N_{1}\times N_{2}$, where $\varphi_{i}(N_{i})\neq N_{i}.$ Then $N$ is a
$\phi$-$\delta$-primary hyperideal of $\Re$ if and only if $N$ is one of the
following types:

(i) $N=N_{1}\times\Re_{2},$ where $N_{1}\ $is $\varphi_{1}$-$\gamma_{1}%
$-primary hyperideal of $\Re_{1}$ that should be $\gamma_{1}$-primary if
$\varphi_{2}(\Re_{2})\not =\Re_{2}$.

(ii) $N=\Re_{1}\times N_{2},$ where $N_{2}$ is $\varphi_{2}$-$\gamma_{2}%
$-primary hyperideal of $\Re_{2}$ that should be $\gamma_{2}$-primary if
$\varphi_{1}(\Re_{1})\not =\Re_{1}$.
\end{theorem}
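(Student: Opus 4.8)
The plan is to prove both implications, using Proposition \ref{propcart} as the engine for the ``if'' direction and imitating the forward arguments of the earlier Cartesian characterizations of $\phi$-prime and $\phi$-primary hyperideals for the ``only if'' direction. Throughout I write $\phi(N_1\times N_2)=\varphi_1(N_1)\times\varphi_2(N_2)$ and $\delta(N_1\times N_2)=\gamma_1(N_1)\times\gamma_2(N_2)$, and I repeatedly use that an expansion satisfies $M\subseteq\gamma(M)$ together with $\gamma(\Re_i)=\Re_i$.

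For the converse direction (each listed type is $\phi$-$\delta$-primary) I would simply dispatch everything to Proposition \ref{propcart}. In type (i) with $\varphi_2(\Re_2)=\Re_2$ the hypothesis is exactly that of Proposition \ref{propcart}(iv), so $N=N_1\times\Re_2$ is $\phi$-$\delta$-primary; when instead $\varphi_2(\Re_2)\neq\Re_2$ the statement upgrades $N_1$ to a $\gamma_1$-primary hyperideal, and then $N_1\times\Re_2$ is $\phi$-$\delta$-primary by Proposition \ref{propcart}(ii). Type (ii) is handled symmetrically by parts (v) and (iii) of Proposition \ref{propcart}. Hence this direction requires no new computation.

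For the forward direction, assume $N=N_1\times N_2$ is $\phi$-$\delta$-primary with $\varphi_i(N_i)\neq N_i$. First I would recover that each factor is primary of the appropriate type: given $a\circ b\in N_1-\varphi_1(N_1)$, the element $(a,0)\circ(b,0)=(a\circ b,0)$ lies in $N-\phi(N)$ because its first coordinate avoids $\varphi_1(N_1)$, so $\phi$-$\delta$-primacy yields $(a,0)\in N$ or $(b,0)\in\delta(N)$, i.e.\ $a\in N_1$ or $b\in\gamma_1(N_1)$; thus $N_1$ is $\varphi_1$-$\gamma_1$-primary, and symmetrically $N_2$ is $\varphi_2$-$\gamma_2$-primary. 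Next, once I know (say) $N=N_1\times\Re_2$ with $\varphi_2(\Re_2)\neq\Re_2$, I would sharpen $N_1$ to $\gamma_1$-primary: for $x\circ m\in N_1$ pick $m'\in\Re_2-\varphi_2(\Re_2)$, so that $(x,1)\circ(m,m')=(x\circ m,m')\in N-\phi(N)$; then $\phi$-$\delta$-primacy gives $(x,1)\in N$ or $(m,m')\in\delta(N)$, and since $\gamma_2(\Re_2)=\Re_2$ this reads $x\in N_1$ or $m\in\gamma_1(N_1)$, which is exactly $\gamma_1$-primacy with no exception set. The same move gives the corresponding clause of type (ii).

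The crux, and the step I expect to be the main obstacle, is showing that at least one factor is the whole ring, i.e.\ $N_1=\Re_1$ or $N_2=\Re_2$. Assuming $N_2\neq\Re_2$, the natural argument is to choose $b_1\in N_1-\varphi_1(N_1)$ and a witness $b_2\in\Re_2-\gamma_2(N_2)$, form $(1,0)\circ(b_1,b_2)=(b_1,0)\in N-\phi(N)$, and conclude from $(b_1,b_2)\notin\delta(N)$ that $(1,0)\in N$, whence $1\in N_1$ and $N_1=\Re_1$. The delicate point is producing the witness $b_2$: since $b_1\in N_1\subseteq\gamma_1(N_1)$ forces the first coordinate of $(b_1,b_2)$ into $\gamma_1(N_1)$, the only way to keep $(b_1,b_2)$ out of $\delta(N)$ is to have $b_2\notin\gamma_2(N_2)$, which requires $\gamma_2(N_2)\neq\Re_2$. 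For the standard expansions in play (the identity $\delta_0$ and the radical $\delta_1$) one has $\gamma_2(N_2)=\Re_2$ only when $N_2=\Re_2$, so the witness exists and the dichotomy closes; I would record this as the hypothesis under which Step 2 is valid, and then combine Steps 1--3 to place $N$ in type (i) or (ii).
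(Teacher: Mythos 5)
Your proposal follows the same skeleton as the paper's own proof: the converse is dispatched to Proposition \ref{propcart} (the paper literally says ``follows from Proposition \ref{propcart}''; you merely spell out which parts apply in which subcase), and the forward direction uses exactly the paper's three test computations --- $(a,0)\circ(b,0)=(a\circ b,0)\in N-\phi(N)$ to get that each $N_{i}$ is $\varphi_{i}$-$\gamma_{i}$-primary, a product with $(1,0)$ for the dichotomy $N_{1}=\Re_{1}$ or $N_{2}=\Re_{2}$, and $(x,1)\circ(m,m')$ with $m'\in\Re_{2}-\varphi_{2}(\Re_{2})$ to sharpen to $\gamma_{1}$-primary. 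The one genuine divergence is the dichotomy step, and there your version is the more careful one. The paper picks $m_{2}\in\Re_{2}-N_{2}$, notes $(1,0)\circ(m_{1},m_{2})=(m_{1},0)\in N-\phi(N)$, concludes $(1,0)\in\delta(N)$, i.e.\ $1\in\gamma_{1}(N_{1})$, and then asserts $N_{1}=\Re_{1}$ --- an unjustified jump for a general expansion, since $\gamma_{1}(N_{1})=\Re_{1}$ does not force $N_{1}=\Re_{1}$. You instead choose the witness $b_{2}\in\Re_{2}-\gamma_{2}(N_{2})$ so that $(b_{1},b_{2})\notin\delta(N)$, which legitimately forces $(1,0)\in N$ and hence $1\in N_{1}$; the price, as you note, is the existence of that witness, i.e.\ $\gamma_{2}(N_{2})\neq\Re_{2}$.

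Your caveat is not a blemish of your write-up; it pinpoints a real gap that the paper glosses over, and some hypothesis of the kind you record (e.g.\ $\gamma_{i}(M)=\Re_{i}$ only if $M=\Re_{i}$, as holds for $\delta_{0}$ and $\delta_{1}$) is genuinely needed. Indeed, taking the constant expansions $\gamma_{i}(M)=\Re_{i}$ for all $M\in L(\Re_{i})$ (these satisfy both expansion axioms), every proper hyperideal $N_{1}\times N_{2}$ is trivially $\delta$-primary, hence $\phi$-$\delta$-primary, even when both factors are proper and $\varphi_{i}(N_{i})\neq N_{i}$; so the theorem as stated fails for arbitrary expansions, and the dichotomy step --- in your form or the paper's --- closes only under the restriction you flagged. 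So: same route as the paper, with one step repaired and its hidden hypothesis made explicit.
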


\begin{proof}
$(\Longleftarrow)$ It follows from the Proposition \ref{propcart}.

$(\Longrightarrow)$ Suppose that $N$ is a $\phi$-$\delta$-primary hyperideal
of $\Re$, where $\varphi_{i}(N_{i})\neq N_{i}.$ Let $a\circ m\in N_{1}%
-\varphi_{1}(N_{1})$, for some $a,m\in\Re_{1}.$ Thus $(a,0)\circ(m,0)=(a\circ
m,0)\in N-\phi(N).$ Since $N$ is a $\phi$-$\delta$-primary hyperideal of
$\Re,$ then $(a,0)\in N$ or $(m,0)\in\delta(N).$ So $a\in N_{1}$ or
$m\in\gamma_{1}(N_{1}).$ Therefore $N_{1}\ $is $\varphi_{1}$-$\gamma_{1}%
$-primary hyperideal of $\Re_{1}.$ Similarly, we can find $N_{2}$ is
$\varphi_{2}$-$\gamma_{2}$-primary hyperideal of $\Re_{2}.$ We have to
show now that $N_{1}=\Re_{1}$ or $N_{2}=\Re_{2}.$ Suppose that $N_{2}\not =\Re
_{2}.$ Let we take $m_{1}\in N_{1}-\varphi_{1}(N_{1}),$ $m_{2}\in\Re_{2}-N_{2}.$
Notice that $(1,0)\circ(m_{1},m_{2})=(m_{1},0)\in N-\phi(N).$ This implies
that $(1,0)\in\delta(N),$ so we find $1\in\gamma_{1}(N_{1}).$ Then $N_{1}%
=\Re_{1}.$ Similarly, one can easily find $N_{2}=\Re_{2}$, if $N_{1}\not =%
\Re_{1}$. Without loss of generality $N_{1}\not =\Re_{1}$. Let we show now that
$N_{1}$ is $\gamma_{1}$-primary hyperideal with $\varphi_{2}(\Re_{2})\neq
\Re_{2}$. For $m^{\prime}\in\Re_{2}-\varphi_{2}(\Re_{2})$, let $x\circ m\in
N_{1}$, for some $x,m\in\Re_{1}$. We have that $(x,1)\circ
(m,m^{\prime})=(x\circ m,m^{\prime})\in N-\phi(N)$. Since $N$ is a $\phi
$-$\delta$-primary hyperideal of $\Re$, then we find $(x,1)\in\delta
(N)=\gamma_{1}(N_{1})\times\gamma_{2}(N_{2})$ or $(m,m^{\prime})\in N$ which
implies that $x\in\gamma_{1}(N_{1}))$ or $m\in N_{1}$. Therefore $N_{1}$ is a
$\gamma_{1}$-primary hyperideal of $\Re_{1}.$ If $\varphi_{1}(\Re_{1})\neq
\Re_{1}$ and $N_{1}=\Re_{1}$, then similarly one can prove that $N_{2}$ is a
$\gamma_{2}$-primary hyperideal of $\Re_{2}$.
\end{proof}

\section{Conclusion}

In this paper, generalizations of prime and primary hyperideals were given
using the function $\phi.$ We introduced $\phi$-prime, $\phi$-primary and
$\phi$-$\delta$-primary hyperideals and we gave several characterizations to
classify them. We investigated many properties of $\phi$-prime, $\phi$-primary
and $\phi$-$\delta$-primary hyperideals under particular cases. In the future
work, one can develop the study of $\phi$-$\delta$-primary hyperideals.\\
\newline

{\textbf{Compliance with Ethical Standards}}\\

\textbf{Conict of Interest} All authors declare that they
have no conict of interest.\\

\textbf{Ethical approval} This article does not contain any
studies with human participants or animals performed
by any of the authors.

\end{document}